\theoremstyle{plain}
\newtheorem{theorem}{Theorem}[section]
\newtheorem{lemma}[theorem]{Lemma}
\theoremstyle{definition}
\newtheorem{example}{Example}[section]
\newtheorem{definition}{Definition}[section]
\newcommand{\trans}{^{\mathrm{T}}}
\def\BibTeX{{\rm B\kern-.05em{\sc i\kern-.025em b}\kern-.08em
    T\kern-.1667em\lower.7ex\hbox{E}\kereln-.125emX}}
\title{On the number of generalized cospectral mates of graphs}
\author{\small Muhammad Raza$^{{\rm a}}$\thanks{Corresponding author: mraza@itu.edu.pk}\quad\quad Obaid Ullah Ahmad$^{\rm b}$\quad\quad Mudassir Shabbir$^{\rm a}$\quad\quad Waseem Abbas$^{{\rm c}}$
\\
{\footnotesize$^{\rm a}$Department of Computer Science, Lahore University of Management Sciences, Lahore, Pakistan}\\
{\footnotesize$^{\rm b}$Department of Electrical Engineering, The University of Texas at Dallas, Richardson, TX}\\
{\footnotesize$^{\rm c}$Department of Systems Engineering, The University of Texas at Dallas, Richardson, TX
}
}
\date{}
\begin{document}

\maketitle

\begin{abstract}

This paper establishes an upper bound on the number of generalized cospectral mates of simple graphs, where the generalized spectrum consists of the spectrum of a graph and its complement. Moving beyond the classical problem of identifying graphs determined by their generalized spectrum, we address the more quantitative question of how many non-isomorphic graphs can share the same generalized spectrum. Our approach is based on arithmetic constraints derived from the Smith Normal Form (SNF) of the walk matrix, which leads to a tight upper bound on the number of generalized cospectral mates of a graph. Our upper bound applies to a much broader class of graphs than those previously shown to have no generalized cospectral mates (graphs determined by generalized spectrum). Consequently, this work extends the family of graphs for which strong and informative spectral uniqueness results are available.

\noindent\textbf{Keywords}: Adjacency matrix, Walk matrix, Cospectral graphs, Generalized spectrum
\noindent\textbf{MSC}: 05C50
\end{abstract}

\section{Introduction}
\label{section:intro}
Let $G = (V, E)$ be a simple graph with $n$ vertices, meaning it is finite, undirected, and contains no loops or multiple edges. The \emph{adjacency matrix} $A(G)$ is the $n \times n$ symmetric matrix where each entry $a_{ij} = 1$ if vertices $v_i$ and $v_j$ are adjacent, and $a_{ij} = 0$ otherwise. The \emph{spectrum} of a graph $G$, denoted $\operatorname{Spec}(G)$, is the multiset of eigenvalues of its adjacency matrix $A(G)$, counted with algebraic multiplicity.

Two graphs $G$ and $H$ are \emph{isomorphic}, denoted $G \cong H$, if there exists a bijection $f: V(G) \to V(H)$ such that vertices $u$ and $v$ are adjacent in $G$ if and only if $f(u)$ and $f(v)$ are adjacent in $H$. In other words, isomorphic graphs have identical structure, differing only by a relabeling of vertices. The \emph{complement} of a graph $G$, denoted $\overline{G}$, is the graph on the same vertex set where two vertices are adjacent if and only if they are not adjacent in $G$. The adjacency matrix of the complement is given by $A(\overline{G}) = J - I - A(G)$, where $J$ denotes the $n \times n$ matrix of all ones and $I$ is the identity matrix.

A graph $G$ is said to be \emph{determined by its spectrum} (DS) if every graph with the same spectrum as $G$ is isomorphic to $G$. A central open question in spectral graph theory is: “Which graphs are DS?” This question was first raised by Günthard and Primas \cite{gunthard1956zusammenhang} in the context of chemistry and is closely related to Kac’s famous question \cite{kac1966can}, “Can one hear the shape of a drum?” Fisher \cite{fisher1966hearing} modeled the drum as a graph whose sound is described by its spectrum, linking Kac’s question to the graph being DS. Haemers conjectured that almost all graphs are DS \cite{van2003graphs, haemers2016almost}. Nevertheless, proving that a given graph is DS remains challenging; see \cite{van2003graphs, van2009developments} for surveys.

A straightforward extension of this problem is considered in the context of \emph{generalized spectrum}, defined as the pair $(\operatorname{Spec}(G), \operatorname{Spec}(\overline{G}))$ \cite{wang2006sufficient}. This notion incorporates spectral information of both a graph and its complement, often providing stronger structural constraints than the spectrum alone. Two or more graphs are \emph{generalized cospectral} if they have the same generalized spectrum. While isomorphic graphs are clearly generalized cospectral, the converse does not hold in general. A set of pairwise non-isomorphic graphs that are generalized cospectral are referred to as \emph{generalized cospectral mates} of each other. A graph is said to be \emph{determined by its generalized spectrum} (DGS) if it has no generalized cospectral mate.


In recent years, the {\em walk matrix} has emerged as a central tool in studying whether a graph is DGS. For a graph $G$ with $n$ vertices, the \emph{walk matrix} $W(G)$ is the $n \times n$ matrix with columns $e,\, A(G)e,\, \dots,\, A(G)^{n-1}e$, where $e$ is the all-ones column vector of length $n$. A graph $G$ is \emph{controllable} if $W(G)$ is non-singular. Wang \cite{wang2017simple} introduced a large family of controllable graphs and proved that all graphs in this family are DGS. Extending these ideas to oriented graphs, Qiu et al. \cite{qiu2021oriented} introduced a family of oriented graphs determined by their generalized skew spectra.

Beyond determining whether a graph is uniquely specified by its generalized spectrum, a natural quantitative question is: how many generalized cospectral mates can it have? Wang et al. \cite{wang2023graphs} identified a family of graphs where each graph has at most one generalized cospectral mate and proposed an algorithm to find such a mate. Subsequently, they provided an algorithm to find generalized cospectral mates for controllable or {\em almost} controllable graphs \cite{wang2025haemers}. However, the practical feasibility of this algorithm is constrained by the computational cost of factorizing large integers \cite{crandall2005prime}. Pursuing this quantitative direction further, Raza et al. \cite{raza2025upper} and Lin et al. \cite{lin2025upper} independently established upper bounds on the number of generalized cospectral mates for a large family of oriented graphs.

In this paper, we study how structural information encoded in the walk matrix restricts the number of generalized cospectral mates of a graph, with the goal of obtaining explicit upper bounds on this number. Our approach relies on the observation that arithmetic properties of the walk matrix impose strong constraints on the rational orthogonal matrices that relate adjacency matrices of generalized cospectral graphs. In particular, the Smith Normal Form of the walk matrix imposes divisibility conditions that any such matrix must satisfy, thereby limiting the set of admissible mates. Motivated by these observations, we introduce a family of controllable graphs whose walk matrices possess a highly constrained Smith Normal Form.

Let $d_1(M) \mid d_2(M) \mid \dots \mid d_n(M)$ denote the invariant factors of the Smith Normal Form of a matrix $M$. We define $\mathcal{F}_n$ as the family of controllable graphs on $n$ vertices where each graph $G$ satisfies:
\begin{equation}
\label{eqn:Fn}
d_{\lceil n/2 \rceil}(W(G)) = 1 \quad \text{and} \quad d_{n-1}(W(G)) = 2. 
\end{equation}
To analyze generalized cospectral mates of graphs in $\mathcal{F}_n$, we study the arithmetic structure of rational orthogonal matrices that relate them. A key parameter is the {\em level} of such a matrix.

\begin{definition}
The level of a rational matrix $Q$, denoted $\ell(Q)$, is defined as the smallest positive integer $x$ such that $xQ$ is an integral matrix.
\end{definition}

Our first theorem shows that, within $\mathcal{F}_n$, the level uniquely determines any generalized cospectral mate up to isomorphism.

\begin{theorem}
\label{theorem:level-unique}
Let $G \in \mathcal{F}_n$, and let $H_1$ and $H_2$ be graphs that are generalized cospectral to $G$. Let $Q_1$ and $Q_2$ be the rational orthogonal matrices satisfying $Q_i\trans A(G) Q_i = A(H_i)$ and $Q_i e = e$ for $i=1,2$. If $\ell(Q_1) = \ell(Q_2)$, then $H_1 \cong H_2$.
\end{theorem}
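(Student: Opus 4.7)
The plan is to consider $P := Q_2\trans Q_1$ and show that it is a permutation matrix; the associated permutation will then be an isomorphism from $H_2$ to $H_1$. A direct computation gives that $P$ is a rational orthogonal matrix with $Pe = e$ and $P\trans A(H_2) P = A(H_1)$, and any rational orthogonal matrix with integer entries that fixes $e$ is automatically a permutation matrix. Hence it is enough to prove that $P$ has integer entries.

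The first step is to translate the setup into the walk-matrix language. From $Q_i\trans A(G) Q_i = A(H_i)$ and $Q_i e = e$, a routine induction yields the standard identity $Q_i\trans W(G) = W(H_i)$; in particular, $W(G)$, $W(H_1)$, and $W(H_2)$ share a common Smith Normal Form $D = \mathrm{diag}(d_1, \dots, d_n)$ whose entries satisfy~\eqref{eqn:Fn}. Setting $\ell := \ell(Q_1) = \ell(Q_2)$ and $B_i := \ell Q_i \in \mathbb{Z}^{n \times n}$, the integer matrices $B_i$ satisfy $B_i\trans B_i = \ell^2 I$, $B_i e = \ell e$, and $B_i\trans W(G) = \ell W(H_i)$. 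Proving that $P$ is integer is then equivalent to showing $B_2\trans B_1 \equiv 0 \pmod{\ell^2}$ entrywise.

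The heart of the argument is an arithmetic analysis of $B_i$ driven by the SNF hypotheses. Fixing a decomposition $W(G) = UDV$ with $U, V$ unimodular, the identity $B_i\trans W(G) = \ell W(H_i)$ translates column-by-column into divisibility conditions on $B_i\trans U$: its $j$-th column is divisible by $d_j/\gcd(d_j, \ell)$. The condition $d_{\lceil n/2 \rceil}(W(G)) = 1$ makes these constraints vacuous for the first $\lceil n/2 \rceil$ columns, while $d_{n-1}(W(G)) = 2$ forces every remaining invariant factor except possibly $d_n$ to divide $2$, concentrating the non-trivial arithmetic content in a short tail. Combining these divisibility conditions with the orthogonality relation $B_i\trans B_i = \ell^2 I$ should force the residue class of $B_i$ modulo $\ell^2$ to be uniquely determined by $G$ and $\ell$, up to a permutation of columns. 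From this uniqueness one gets $B_1 \equiv B_2 P_0 \pmod{\ell^2}$ for some permutation matrix $P_0$, and hence $B_2\trans B_1 \equiv \ell^2 P_0 \equiv 0 \pmod{\ell^2}$, proving that $P$ is integer. The main obstacle is establishing this residue-uniqueness rigorously: the threshold $\lceil n/2\rceil$ in~\eqref{eqn:Fn} appears to be chosen precisely so that the ``free'' columns of $B_i$ are pinned down, via orthogonality, by the divisibility-constrained columns.
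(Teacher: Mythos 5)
Your overall strategy (show $Q_2\trans Q_1$ is integral, hence a permutation matrix) is the right one and matches the paper's, but the plan has two genuine gaps at exactly the places where the real work happens. First, the prime $p=2$ breaks the ``residue uniqueness'' you are counting on. From $B_i\trans W(G)=\ell W(H_i)$ one gets that the columns of $B_i$ solve $W(G)\trans v\equiv 0\pmod{p^k}$ for each $p^k\parallel\ell$; for an odd prime $p\mid d_n(W(G))$ the hypothesis $d_{n-1}(W(G))=2$ gives $\operatorname{rank}_p(W(G))=n-1$ and the solution space is one-dimensional (Lemma~\ref{lemma:rank-1}), as you want. But for $p=2$ we have $\operatorname{rank}_2(W(G))=\lceil n/2\rceil$, so the solution space modulo $2^k$ has dimension $\lfloor n/2\rfloor$ and the columns of $B_i$ are \emph{not} pinned down by $W(G)$ alone. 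The paper circumvents this by replacing $W(G)$ with the compressed matrix $\hat W(G)$ (Lemma~\ref{lemma:wang-mod2} and the definition following it), whose SNF is $\operatorname{diag}(1,\dots,1,d_n(W(G))/2)$ by Lemma~\ref{lemma:what}; only then does the one-dimensionality argument apply to every prime dividing the level. Your plan, which works with $W(G)$ directly, has no mechanism to recover this for $p=2$; ``combining with orthogonality'' is precisely the step that fails without $\hat W$. (A smaller slip in the same passage: the $j$-th column of $B_i\trans U$ is divisible by $\ell/\gcd(d_j,\ell)$, not $d_j/\gcd(d_j,\ell)$, so the constraints are \emph{strongest} where $d_j=1$ and vacuous on the last column, the opposite of what you wrote.)

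Second, even where one-dimensionality holds, it only gives that the columns of $B_1$ and $B_2$ are proportional to a common vector \emph{modulo $p^k$}, which is far weaker than your claimed congruence $B_1\equiv B_2P_0\pmod{\ell^2}$. Upgrading ``proportional mod $p^k$'' to ``$u\trans v\equiv 0\pmod{p^{2k}}$'' is the technical heart of the paper (Lemmas~\ref{lemma:uv}, \ref{lemma:uv-2}, \ref{lemma:p-out}): one expands $(u-\alpha v)\trans(u-\alpha v)\equiv 0\pmod{p^{2k}}$ and uses $u\trans u\equiv v\trans v\equiv 0\pmod{p^{2k}}$ to isolate $2\alpha\, u\trans v$. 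For odd $p$ the factor $2\alpha$ is invertible and you are done, but for $p=2$ it is not, and the paper must invoke the regularity conditions $u\trans e\equiv v\trans e\equiv 2^k\pmod{2^{k+1}}$ to extract the missing factor of $2$ (Lemma~\ref{lemma:uv-2}). Your proposal never confronts this non-invertibility of $2$, nor uses $Q_ie=e$ beyond the final permutation step, so as written the argument cannot close at the prime $2$. The odd-prime part of your plan is essentially sound and parallels the paper; the $p=2$ analysis (both the need for $\hat W(G)$ and the use of regularity in the orthogonality expansion) is missing and is not a routine fix.
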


The existence of the orthogonal matrices appearing in Theorem~\ref{theorem:level-unique} follows from a known result, which we recall in Section~\ref{section:prelim}. The result implies that each admissible level can correspond to at most one generalized cospectral mate of $G$, therefore it suffices to bound the number of admissible levels. Using divisibility properties of the last invariant factor of the walk matrix, we obtain the following bound.

\begin{theorem}
\label{theorem:main}
Let $G \in \mathcal{F}_n$. Let $d_n(W(G)) = \prod_{j=1}^m p_j^{k_j}$ be the prime factorization of the last invariant factor of the walk matrix $W(G)$. Then $G$ can have at most $\left(\prod_{j=1}^m k_j\right)-1$ generalized cospectral mates.
\end{theorem}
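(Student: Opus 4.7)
The plan is to combine Theorem~\ref{theorem:level-unique} with a careful divisibility analysis of the admissible levels $\ell(Q)$. By Theorem~\ref{theorem:level-unique}, distinct generalized cospectral mates of $G$ correspond to distinct values of $\ell(Q)$, so it suffices to enumerate the admissible levels. The ultimate target is to show that $\ell(Q)$ must divide $\prod_{j=1}^m p_j^{k_j - 1}$, which has exactly $\prod_{j=1}^m k_j$ divisors, and then to discard the trivial level $\ell(Q) = 1$.

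First I would establish the elementary divisibility $\ell(Q) \mid d_n(W(G))$. Using $QW(G) = W(H)$ together with the Smith Normal Form $W(G) = PDR$, where $P,R \in GL_n(\mathbb{Z})$ and $D = \operatorname{diag}(d_1(W(G)), \ldots, d_n(W(G)))$, one rearranges to get $Q = W(H) R^{-1} D^{-1} P^{-1}$; since $\operatorname{lcm}(d_1,\ldots,d_n) = d_n$, the matrix $d_n Q$ has integer entries, proving the claim.

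The main step is a $p$-adic refinement: for every prime $p_j$ with $p_j^{k_j} \| d_n(W(G))$, one needs $v_{p_j}(\ell(Q)) \le k_j - 1$. Here the defining hypotheses of $\mathcal{F}_n$, namely $d_{\lceil n/2 \rceil}(W(G)) = 1$ and $d_{n-1}(W(G)) = 2$, are decisive, since they force every invariant factor of $W(G)$ except the last to belong to $\{1,2\}$. Writing $\ell(Q)Q$ as an integer matrix that satisfies $(\ell(Q)Q)\trans (\ell(Q)Q) = \ell(Q)^2 I$ and $(\ell(Q)Q)e = \ell(Q) e$, I would combine these orthogonality and walk-matrix constraints and analyze the resulting system modulo $p_j^{k_j}$; the highly constrained SNF above $d_n(W(G))$ forces the $p_j$-torsion to concentrate in the last column of a suitable integral matrix, ruling out $v_{p_j}(\ell(Q)) = k_j$. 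Finally, the level $\ell(Q) = 1$ forces $Q$ to be an integer orthogonal matrix fixing $e$, hence a permutation matrix, so $H \cong G$; discarding this trivial case yields the claimed bound $\prod_{j=1}^m k_j - 1$ on the number of generalized cospectral mates.

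The principal obstacle is the refined bound $v_{p_j}(\ell(Q)) \le k_j - 1$. The elementary argument only gives $v_{p_j}(\ell(Q)) \le k_j$, and eliminating the top power requires genuine use of the orthogonality relation $Q\trans Q = I$ in tandem with the $\mathcal{F}_n$ hypothesis. I expect the case $p_j = 2$ to be the most delicate, because the penultimate invariant factor $d_{n-1}(W(G)) = 2$ interacts nontrivially with the $2$-part of $d_n(W(G))$, whereas for odd primes the $p$-adic analysis of the SNF of $W(G)$ should be comparatively direct.
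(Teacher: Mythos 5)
Your overall skeleton matches the paper's: invoke Theorem~\ref{theorem:level-unique} to reduce the count of mates to the count of admissible levels, show $\ell(Q)\mid\prod_j p_j^{k_j-1}$, and subtract the trivial level $\ell(Q)=1$. The elementary divisibility $\ell(Q)\mid d_n(W(G))$ via the SNF is fine, and for odd primes your plan is essentially the content of the known result the paper cites (Lemma~\ref{lemma:pk-out}): when $\operatorname{rank}_{p_j}(W(G))=n-1$ (which the $\mathcal{F}_n$ hypotheses guarantee, since all invariant factors below $d_n$ lie in $\{1,2\}$), one gets $\ell(Q)\mid d_n(W(G))/p_j$. Although you only sketch this, the mechanism you describe is the right one for odd $p$.

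The genuine gap is at $p=2$, and you correctly flag it as the delicate case but offer no mechanism to close it. The odd-prime argument breaks down there because the key cancellation requires inverting $2\alpha$ modulo a power of $2$ (this is exactly why Lemma~\ref{lemma:pk-out} is stated only for odd primes). Your elementary analysis therefore only yields $v_2(\ell(Q))\le k_1$ where $2^{k_1}\parallel d_n(W(G))$, which gives $k_1+1$ choices for the exponent of $2$ and hence the weaker bound $(k_1+1)\prod_{j\ge 2}k_j-1$. The paper closes this gap with a specific device: the auxiliary matrix $\hat{W}(G)$ whose later columns are $A^iM(G)e/2$, which satisfies $Q\trans\hat{W}(G)=\hat{W}(H)$ and, crucially, $d_n(\hat{W}(G))=d_n(W(G))/2$ when $\operatorname{rank}_2(W(G))=\lceil n/2\rceil$ (Lemmas~\ref{lemma:wang-mod2} and~\ref{lemma:what}). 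Applying the elementary divisibility to $\hat{W}$ instead of $W$ then gives $\ell(Q)\mid d_n(W(G))/2$, i.e.\ $v_2(\ell(Q))\le k_1-1$. Some such construction (or an equivalent $2$-adic refinement) is indispensable; without it your count of admissible levels is too large and the claimed bound $\bigl(\prod_j k_j\bigr)-1$ does not follow.
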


We begin by reviewing essential preliminaries in Section~\ref{section:prelim}. In Section~\ref{section:onq}, we investigate the properties of a specific set of orthogonal matrices; these properties are then used to prove our main results in Section~\ref{section:proofs}. We then provide illustrative examples and numerical analysis in Section~\ref{section:examples} to demonstrate the application of our results. Section~\ref{section:conclusion} concludes the paper with a summary of our findings and potential directions for future research.

\section{Preliminaries}
\label{section:prelim}
In this section, we review the essential results required for the proofs of Theorems \ref{theorem:level-unique} and \ref{theorem:main}. A matrix $Q$ is called \emph{regular} if $Qe=e$ where $e$ is the all-ones column vector. The following theorem characterizes graphs sharing the same generalized spectrum.

\begin{theorem}
\label{theorem:q}
\cite{wang2006sufficient} Let $G$ be a controllable graph and let $H$ be a graph generalized cospectral to $G$. Then $H$ must be controllable and there exists a unique regular orthogonal matrix $Q$ with rational entries such that:
\begin{equation}
\label{eq:q}
Q\trans A(G) Q = A(H),
\end{equation}
Moreover, the matrix $Q$ satisfies $Q=W(G) W(H)^{-1}$.
\end{theorem}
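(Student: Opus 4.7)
The plan is to use the Johnson--Newman characterization of generalized cospectrality to turn the spectral hypothesis into an equality of walk-matrix Gram products, and then to construct $Q$ explicitly as $W(G)W(H)^{-1}$.

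First, I would invoke the well-known fact that $G$ and $H$ are generalized cospectral if and only if $A(G)$ and $A(H)$ have the same characteristic polynomial together with $e\trans A(G)^{k} e = e\trans A(H)^{k} e$ for every $k \ge 0$ (the walk-count sequence is recovered from $\operatorname{Spec}(G)$ and $\operatorname{Spec}(\overline G)$ by expanding $\operatorname{tr}((J-I-A)^{k})$ and using $Je = ne$). Since the $(i,j)$ entry of $W(G)\trans W(G)$ equals $e\trans A(G)^{i+j-2} e$, this walk-count identity is equivalent to the Gram identity $W(G)\trans W(G) = W(H)\trans W(H)$. Taking determinants and using $\det W(G) \neq 0$ forces $\det W(H) \neq 0$, which proves that $H$ is controllable.

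Next, I set $Q := W(G) W(H)^{-1}$, which is manifestly rational. Orthogonality follows from $Q\trans Q = (W(H)^{-1})\trans W(G)\trans W(G) W(H)^{-1} = (W(H)^{-1})\trans W(H)\trans W(H) W(H)^{-1} = I$. Regularity holds because the first columns of $W(G)$ and $W(H)$ both equal $e$, so $W(H)^{-1} e$ is the first standard basis vector and therefore $Qe = W(G) W(H)^{-1} e = e$. For the conjugation identity, controllability lets me use the Krylov--companion relation: if $C$ denotes the companion matrix of the common characteristic polynomial, then Cayley--Hamilton gives $A(G) W(G) = W(G) C$ and $A(H) W(H) = W(H) C$, so
\begin{equation*}
Q\trans A(G) Q = (W(H)^{-1})\trans W(G)\trans A(G) W(G) W(H)^{-1} = (W(H)^{-1})\trans W(H)\trans W(H) C W(H)^{-1} = W(H) C W(H)^{-1} = A(H).
\end{equation*}

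For uniqueness, if $Q'$ is any regular rational orthogonal matrix with $Q'\trans A(G) Q' = A(H)$, then orthogonality rewrites this as $A(G) Q' = Q' A(H)$, so inductively $A(G)^{k} e = Q' A(H)^{k} e$ for all $k \ge 0$ (using $Q' e = e$). This says $W(G) = Q' W(H)$, and hence $Q' = W(G) W(H)^{-1} = Q$. The only delicate step in the whole argument is the reformulation of generalized cospectrality as the Gram identity for the walk matrices; once this is in hand, everything else reduces to short matrix manipulations. The subtle content is that the spectrum of $\overline G$, combined with that of $G$, actually encodes the full walk-count sequence $\{e\trans A(G)^{k} e\}_{k \ge 0}$.
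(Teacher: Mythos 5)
Your proof is correct, but note that the paper itself offers no proof of this statement --- it is quoted directly from Wang and Xu \cite{wang2006sufficient} --- so there is no internal argument to compare against; what you have reconstructed is essentially the standard argument underlying that citation. Your chain of reasoning checks out at every step: the Johnson--Newman reduction of generalized cospectrality to the walk-count identities $e\trans A(G)^k e = e\trans A(H)^k e$, which yields the Gram identity $W(G)\trans W(G) = W(H)\trans W(H)$; controllability of $H$ by taking determinants; orthogonality and regularity of $Q := W(G)W(H)^{-1}$ from the Gram identity and the shared first column $e$; the conjugation identity via the Krylov relation $A(G)W(G) = W(G)C$ and $A(H)W(H) = W(H)C$ with the common companion matrix $C$ (legitimate since cospectral graphs share a characteristic polynomial); and uniqueness by showing any admissible $Q'$ satisfies $W(G) = Q'W(H)$, which simultaneously delivers the ``moreover'' clause. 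The one ingredient you invoke rather than prove --- that $\operatorname{Spec}(G)$ together with $\operatorname{Spec}(\overline{G})$ determines the full walk-count sequence --- is genuinely the crux, and your parenthetical sketch (induction on $k$, expanding $\operatorname{tr}((J-I-A)^k)$ using the rank-one structure of $J$, with $e\trans A^{k-1}e$ appearing with nonzero coefficient) is the correct classical argument of Johnson and Newman, so relying on it is entirely appropriate here.
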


Motivated by this theorem, we introduce the following notation. Let $O_n(\mathbb{Q})$ denote the set of regular orthogonal matrices of order $n$ with rational entries. We define:
\begin{equation*}
\Gamma(G) = \{Q \in O_n(\mathbb{Q}) \mid Q\trans A(G) Q = A(H) \text{ for some graph } H\}.
\end{equation*}
A key tool for analyzing matrices in $\Gamma(G)$ is the level, as defined in Section~\ref{section:intro}. If $Q \in \Gamma(G)$ has $\ell(Q) = 1$, then $Q$ is an integral orthogonal matrix. Since $Q$ is also regular, it must be a permutation matrix, and consequently the graph $H$ satisfying Eq.~\eqref{eq:q} is isomorphic to $G$. Therefore, generalized cospectral mates arise only when $\ell(Q) > 1$.

Our strategy is to show that if two matrices in $\Gamma(G)$ have the same level, they must differ by a permutation matrix. The next result establishes that such matrices yield isomorphic graphs.

\begin{lemma}\cite{raza2025upper, wang2023graphs}
\label{lemma:permutation-matrix}
Let $G$ be a graph, and let $H_1$ and $H_2$ be graphs generalized cospectral to $G$. Let $Q_1, Q_2 \in \Gamma(G)$ be the unique matrices satisfying $Q_i\trans A(G) Q_i = A(H_i)$ for $i=1,2$. If $Q_2 = Q_1 P$, where $P$ is a permutation matrix, then $H_1 \cong H_2$.
\end{lemma}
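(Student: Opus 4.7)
The plan is to use the hypothesis $Q_2 = Q_1 P$ to express $A(H_2)$ as a permutation-conjugate of $A(H_1)$, and then recognize the resulting identity as encoding a graph isomorphism. Since the statement is essentially an algebraic bookkeeping observation, I expect the argument to be short and direct, with no serious obstacle.

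First I would substitute $Q_2 = Q_1 P$ into the defining relation $A(H_2) = Q_2\trans A(G) Q_2$. Using $(Q_1 P)\trans = P\trans Q_1\trans$ together with $Q_1\trans A(G) Q_1 = A(H_1)$, the expression collapses to
\[
A(H_2) \;=\; P\trans Q_1\trans A(G) Q_1 P \;=\; P\trans A(H_1) P.
\]
This single identity is the core of the proof; note that I do not need to invoke the orthogonality or rationality of $Q_1,Q_2$, only the associativity of matrix multiplication.

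It then remains to interpret this identity combinatorially. Since $P$ is the permutation matrix of some $\pi \in S_n$, the map $A \mapsto P\trans A P$ is exactly the matrix realization of relabeling vertex indices by $\pi$; concretely, $(P\trans A(H_1) P)_{ij} = A(H_1)_{\pi(i)\pi(j)}$. Hence $\pi$ is a bijection between $V(H_2)$ and $V(H_1)$ that preserves adjacency, which is precisely a graph isomorphism, giving $H_1 \cong H_2$.

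The only minor consistency check worth mentioning is that regularity of $Q_1$ and $Q_2$ forces $P = Q_1\trans Q_2$ to satisfy $Pe = e$, which is automatic for any permutation matrix and so imposes no additional constraint. Overall, I do not expect any genuine obstacle; the lemma is the matrix-theoretic shadow of the statement that conjugation by a permutation matrix corresponds to relabeling the vertices of a graph.
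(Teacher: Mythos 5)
Your proof is correct: the substitution $A(H_2) = P\trans Q_1\trans A(G) Q_1 P = P\trans A(H_1) P$ followed by the observation that conjugation by a permutation matrix is a vertex relabeling is exactly the standard argument, and it is the one used in the references the paper cites for this lemma (the paper itself states it without proof). No gaps; the remark that regularity imposes no extra constraint on $P$ is a nice sanity check but not needed.
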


This allows us to reduce the problem of bounding the number of generalized cospectral mates to bounding the number of admissible levels. The Smith Normal Form (SNF) of the walk matrix is central to our approach. An integral matrix $V$ of order $n$ is called \emph{unimodular} if $\det{V} = \pm 1$. The following well-known theorem establishes the existence of the SNF:

\begin{theorem}
(e.g. \cite{schrijver1998book})
For a full-rank integral matrix $M$, there exist unimodular matrices $V_1$ and $V_2$ such that $M = V_1 S V_2$, where $S = \operatorname{diag}(d_1(M), d_2(M), \ldots, d_n(M))$ is the SNF of $M$, and the invariant factors satisfy $d_i(M) \mid d_{i+1}(M)$ for $i = 1, 2, \ldots, n-1$.
\end{theorem}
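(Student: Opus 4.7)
The plan is to sketch the standard algorithmic proof: reduce $M$ to diagonal form using only unimodular row and column operations, which correspond to left- and right-multiplication by elementary unimodular matrices (row/column swaps, sign changes, and adding an integer multiple of one row/column to another). Any finite product of such elementary matrices is again unimodular, so producing a sequence of operations that transforms $M$ into a diagonal matrix $S$ immediately yields $V_1$ and $V_2$ with $V_1 M V_2 = S$, and hence $M = V_1^{-1} S V_2^{-1}$ after renaming the inverses.

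I would proceed by induction on $n$. The base case $n=1$ is immediate, since $M=(d_1)$ is already in SNF. For the inductive step, the goal is to reduce $M$ to block form $\operatorname{diag}(d_1, M')$ where $d_1$ divides every entry of the $(n-1) \times (n-1)$ submatrix $M'$; then applying the inductive hypothesis to $M'$ produces the full divisibility chain $d_1 \mid d_2 \mid \cdots \mid d_n$.

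The core reduction is a minimum-element loop. Among the nonzero entries of $M$, select one of minimum absolute value and, via row and column swaps, move it to position $(1,1)$; call it $a$. For each $j > 1$, apply Euclidean division $m_{1j} = q a + r$ with $0 \leq r < |a|$, and subtract $q$ times column $1$ from column $j$ to replace $m_{1j}$ by $r$; handle column $1$ analogously via row operations. Whenever a nonzero remainder $r$ appears, swap it into position $(1,1)$ and restart the loop. Since $|M_{11}|$ is a strictly decreasing sequence of positive integers across restarts, the loop terminates with the first row and first column zero outside position $(1,1)$, where the entry is $d_1 := M_{11}$.

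To guarantee that $d_1$ divides every entry of the remaining $(n-1) \times (n-1)$ block, I would add an outer loop: if some $m_{ij}$ in that block is not divisible by $d_1$, add row $i$ to row $1$ (which introduces $m_{ij}$ into row $1$) and restart the whole procedure; once again $|M_{11}|$ must strictly decrease, so the outer loop terminates. After it does, one final clearing of the first row and column leaves the desired block form, and the induction finishes the proof. Full rank of $M$ is preserved throughout because each elementary operation has determinant $\pm 1$, so every $d_i$ is nonzero. The main subtlety is the termination argument: one must carefully verify that across both the inner Euclidean loop and the outer divisibility loop, the positive integer at position $(1,1)$ descends strictly in $\mathbb{N}$, so the process cannot continue indefinitely.
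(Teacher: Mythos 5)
Your proposal is the standard algorithmic proof of the Smith Normal Form theorem (pivot minimization via the Euclidean algorithm, the outer divisibility loop, and induction on the size of the matrix), and it is correct, including the termination argument via strict descent of $|M_{11}|$. Note that the paper does not prove this statement at all --- it is imported as a known result with a citation to Schrijver's book --- so there is no in-paper argument to compare against; your sketch matches the classical proof found in that reference.
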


The following lemma shows that the last invariant factor of the integral matrices bounds the level of the rational orthogonal matrix relating them.

\begin{lemma}
\label{lemma:level-dn}
\cite{qiu2023smith}
Let $X$ and $Y$ be two non-singular integral matrices such that $QX = Y$, where $Q$ is a rational orthogonal matrix. Then $\ell(Q) \mid \gcd(d_n(X),d_n(Y))$.
\end{lemma}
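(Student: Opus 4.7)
My plan is to combine two ingredients. The first is a standard consequence of the Smith Normal Form: for any non-singular integral matrix $M$, the last invariant factor $d_n(M)$ equals the smallest positive integer $d$ such that $dM^{-1}$ is an integral matrix. I would prove this directly from the SNF: writing $M = V_1 S V_2$ with $V_1,V_2$ unimodular and $S = \operatorname{diag}(d_1(M),\ldots,d_n(M))$, we have $M^{-1} = V_2^{-1} S^{-1} V_1^{-1}$, so $dM^{-1}$ is integral iff $dS^{-1}$ is integral iff $d_i(M)\mid d$ for every $i$; by the divisibility chain $d_1(M)\mid\cdots\mid d_n(M)$, the minimal such $d$ is precisely $d_n(M)$. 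The second ingredient is the orthogonality of $Q$, which is what will symmetrize the roles of $X$ and $Y$.

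With the characterization in hand, I would derive the first divisibility as follows. From $QX = Y$ we get $Q = Y X^{-1}$, and therefore
\[
d_n(X)\, Q \;=\; Y\cdot\bigl(d_n(X)\, X^{-1}\bigr)
\]
is a product of two integral matrices, hence integral. By the definition of the level, $\ell(Q)\mid d_n(X)$.

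For the second divisibility I would invoke orthogonality: since $Q^{-1} = Q\trans$, the identity $QX = Y$ rearranges to $Q\trans = X Y^{-1}$, and the same computation gives $d_n(Y)\, Q\trans = X\cdot\bigl(d_n(Y)\, Y^{-1}\bigr)$, which is integral. Because transposition preserves entrywise integrality, $\ell(Q\trans) = \ell(Q)$, and hence $\ell(Q)\mid d_n(Y)$. Combining the two relations yields $\ell(Q)\mid \gcd\bigl(d_n(X),d_n(Y)\bigr)$, as claimed.

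There is no real obstacle in this lemma: the minimal-denominator characterization of $d_n(M)$ in the opening paragraph is the only non-routine input, and it is an immediate SNF computation. The conceptually important point, which I would flag explicitly in the write-up, is that orthogonality converts a one-sided divisibility bound into a symmetric pair, thereby upgrading $\ell(Q)\mid d_n(X)$ to the tighter gcd bound that will be needed in the sequel.
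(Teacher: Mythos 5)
Your proof is correct. Note that the paper does not prove this lemma at all — it is imported from Qiu et al.~\cite{qiu2023smith} as a black box — so there is no internal argument to compare against; your write-up supplies exactly the standard proof. Both halves check out: the minimal-denominator characterization of $d_n(M)$ follows from the SNF as you describe (the set of integers $d$ with $dM^{-1}$ integral is the ideal generated by $d_n(M)$, since $dS^{-1}$ is integral iff $d_n(M)\mid d$ by the divisibility chain), and the observation that orthogonality gives $Q\trans = XY^{-1}$, together with $\ell(Q\trans)=\ell(Q)$, correctly symmetrizes the bound to yield the gcd. One small point worth making explicit in a final write-up: the conclusion $\ell(Q)\mid d_n(X)$ from the integrality of $d_n(X)\,Q$ uses that the set of integers $x$ with $xQ$ integral is an ideal of $\mathbb{Z}$ generated by $\ell(Q)$, not merely that $\ell(Q)\le d_n(X)$.
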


By Theorem~\ref{theorem:q}, $Q\trans W(G) = W(H)$, so $\ell(Q) \mid \operatorname{gcd}(d_n(W(G)), d_n(W(H)))$, and in particular $\ell(Q) \mid d_n(W(G))$. Another useful property of the SNF concerns the rank of a matrix over finite fields, which plays an important role in analyzing the structure of matrices in $\Gamma(G)$. For a prime $p$, we denote the finite field of order $p$ by $\mathbb{F}_p$, and the rank of an integral matrix $M$ over this field by $\operatorname{rank}_p(M)$. Specifically, $\operatorname{rank}_p(M)$ is equal to the number of invariant factors of $M$ not divisible by $p$.

Recall that the definition of $\mathcal{F}_n$ requires $d_{\lceil n/2 \rceil}(W(G)) = 1$ and $d_{n-1}(W(G)) = 2$. It follows that for any $G \in \mathcal{F}_n$, we have $\operatorname{rank}_p(W(G)) = n-1$ for every odd prime $p$ dividing $d_n(W(G))$, and $\operatorname{rank}_2(W(G)) \ge \lceil \frac{n}{2} \rceil$. The following lemma provides a general upper bound on $\operatorname{rank}_2(W(G))$:

\begin{lemma}{\cite{wang2017simple}}
\label{lemma:wang-rank2}
Let $G$ be a graph of order $n$. Then $\operatorname{rank}_2(W(G)) \leq \left\lceil \frac{n}{2} \right\rceil$.
\end{lemma}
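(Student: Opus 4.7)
My plan is to work entirely over $\mathbb{F}_2$. Write $r = \operatorname{rank}_2(W(G))$ and let $V = \operatorname{span}_{\mathbb{F}_2}(e, Ae, \ldots, A^{n-1}e) \subseteq \mathbb{F}_2^n$, so $\dim V = r$. The strategy combines a parity identity for walk counts with a duality argument for the standard non-degenerate bilinear form on $\mathbb{F}_2^n$.

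The first step is the combinatorial fact that $c_k := e\trans A^k e \equiv 0 \pmod 2$ for every $k \ge 1$. The integer $c_k$ counts walks of length $k$ in $G$, and reversal is an involution on this set of walks whose fixed points are the palindromic walks. For odd $k$, a palindromic walk would require a loop at its middle step---impossible in a simple graph---so $c_k$ is even. For even $k = 2m$, a palindromic walk of length $2m$ is determined by its first half, yielding a bijection with arbitrary walks of length $m$ and hence $c_{2m} \equiv c_m \pmod 2$. Iterating these two relations reduces every $c_k$ with $k \ge 1$ to some $c_{k'}$ with $k'$ odd, and thus to $0$.

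Next I would combine this with the standard dot product $\langle u, w\rangle = u\trans w$, which is non-degenerate on $\mathbb{F}_2^n$, so $\dim V^\perp = n - r$. Since $\langle A^i e, A^j e\rangle = c_{i+j}$, the previous step kills every such pairing except possibly $\langle e, e\rangle = n \bmod 2$. Equivalently, $W(G)\trans W(G) \pmod 2$ has at most one nonzero entry and therefore rank at most $1$; and this rank coincides with the rank of the bilinear form restricted to $V$, since it is the Gram matrix of the form on the spanning set $\{A^i e\}_{i=0}^{n-1}$ of $V$. Consequently the radical $V \cap V^\perp$ has dimension at least $r - 1$.

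Finally, since $V \cap V^\perp \subseteq V^\perp$, I obtain $r - 1 \le n - r$, i.e., $r \le (n+1)/2$, and integrality upgrades this to $r \le \lceil n/2 \rceil$. The only genuinely substantive ingredient is the walk-parity identity; the remainder is a short duality calculation, and I do not anticipate a serious obstacle.
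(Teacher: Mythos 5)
Your argument is correct. The paper itself states this lemma only as a citation to Wang (2017), and your proof is essentially the standard one from that source: the key facts are the parity identity $e\trans A^k e \equiv 0 \pmod 2$ for $k\ge 1$ (which Wang derives algebraically via $x\trans x \equiv e\trans x$ and $x\trans A x \equiv 0 \pmod 2$, whereas you use the reversal involution --- both reductions are structurally identical) and the resulting observation that $W\trans W \bmod 2$ has rank at most $1$; your closing duality/radical computation is just a repackaging of the Sylvester rank inequality $2\operatorname{rank}_2(W) - n \le \operatorname{rank}_2(W\trans W)$ used in the original.
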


Combining this with the lower bound derived above, we conclude that $\operatorname{rank}_2(W(G))=\left\lceil \frac{n}{2} \right\rceil$ for every $G \in \mathcal{F}_n$. We introduce some notation before stating the next result. For integers $m$, $k$, and $n$ (with $n \neq 0$), we write $m^k \parallel n$ to indicate that $m^k \mid n$ but $m^{k+1} \nmid n$. The unique integer $k$ satisfying this condition is denoted by $\operatorname{ord}_m(n)$.

The following lemma uses the rank constraint to show that not every divisor of $d_n(W(G))$ is attainable as $\ell(Q)$.

\begin{lemma}
\label{lemma:pk-out}
\cite{qiu2023smith}
Let $G$ be a graph, $ Q \in \Gamma(G) $ and $ p $ be an odd prime. If $p^k \parallel \det{W(G)}$ and $\operatorname{rank}_p(W(G)) = n-1$, then $\ell(Q) \mid \frac{d_n(W(G))}{p} $.
\end{lemma}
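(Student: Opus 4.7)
Set $\ell=\ell(Q)$ and $M=\ell Q$. By Theorem~\ref{theorem:q} there is a graph $H$ generalized cospectral to $G$ with $M\trans W(G)=\ell W(H)$, and the orthogonality and regularity of $Q$ give $M\trans M=MM\trans=\ell^2 I$ together with $M\trans e=Me=\ell e$. Lemma~\ref{lemma:level-dn} already yields $\operatorname{ord}_p(\ell)\le k$; the strategy is to assume $\operatorname{ord}_p(\ell)=k$ and derive a contradiction.

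First I would reduce modulo $p$. Since $p\mid\ell$, the identity $M\trans W(G)\equiv 0\pmod p$ places the column space of $W(G)$ inside $\ker_p(M\trans)$, and $\operatorname{rank}_p(W(G))=n-1$ forces $\operatorname{rank}_p(M)\le 1$; minimality of $\ell$ excludes $M\equiv 0\pmod p$, so $M\equiv uv\trans\pmod p$ with nonzero $u,v\in\mathbb{F}_p^n$. Regularity yields $u\trans e\equiv v\trans e\equiv 0\pmod p$, and expanding $M\trans M\equiv 0\pmod{p^2}$ gives the isotropy $u\trans u\equiv v\trans v\equiv 0\pmod p$, using that $p$ is odd so that $2$ is invertible.

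To promote this to modulo $p^k$ I would invoke the Smith Normal Form $W(G)=V_1 S V_2$ with $S=\operatorname{diag}(d_1,\dots,d_n)$, $\gcd(d_i,p)=1$ for $i<n$ and $\operatorname{ord}_p(d_n)=k$. The identity rewrites as $M\trans V_1 S=\ell W(H)V_2^{-1}$, and comparing $i$-th columns gives $d_i(M\trans V_1)_{\cdot i}=\ell (W(H)V_2^{-1})_{\cdot i}$; for $i<n$ the coprimality of $d_i$ with $p$ together with $p^k\mid\ell$ forces $p^k\mid(M\trans V_1)_{\cdot i}$, so $M\trans\equiv yx\trans\pmod{p^k}$, where $x\in\mathbb{Z}^n$ is the (primitive) $n$-th row of $V_1^{-1}$ transposed and $y=(M\trans V_1)_{\cdot n}$. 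Writing $M\trans=p^k P+yx\trans$ and expanding $MM\trans=\ell^2 I=p^{2k}\ell'^2 I$ yields
\begin{equation*}
(y\trans y)\,xx\trans+p^k\bigl(P\trans y\,x\trans+x\,y\trans P\bigr)+p^{2k}P\trans P=p^{2k}\ell'^2 I.
\end{equation*}
A $p$-adic bootstrap then closes the argument: reducing mod $p^{k+1}$ and using primitivity of $x$ gives $p^{k+1}\mid y\trans y$, and the leftover rank-$\le 2$ symmetric equation $\alpha x\trans+x\alpha\trans\equiv 0\pmod p$ (solvable because $p$ is odd) forces $P\trans y\equiv -(c/2)x\pmod p$ with $c=y\trans y/p^k$. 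Since $P\trans y$ lies in the mod-$p$ span of the first $n-1$ columns of $V_1^{-T}$ by construction of $P$, while $x$ is the $n$-th such column, the congruence forces $c\equiv 0\pmod p$, giving the next divisibility on $y\trans y$. Iterating while tracking when the $p^{2k}P\trans P$ term ``switches on,'' and finally matching against the non-vanishing $p^{2k}\ell'^2 I$ modulo $p^{2k+1}$, produces the desired contradiction.

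\textbf{Main obstacle.} The delicate technical point is the bootstrap: maintaining simultaneous divisibility bookkeeping on $y\trans y$ and $P\trans y$, and arranging the terminal mod-$p^{2k+1}$ comparison so that the rank-bounded left-hand side cannot reproduce the full-rank identity $\ell'^2 I$. The oddness of $p$ is invoked repeatedly — to halve the symmetric rank-$\le 2$ equations, to ensure nondegeneracy of the standard quadratic form, and to invert $2$ when extracting isotropic constraints — which explains why the lemma is restricted to odd primes.
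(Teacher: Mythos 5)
First, note that the paper does not prove this lemma at all --- it is imported verbatim from \cite{qiu2023smith} --- so there is no internal proof to compare against; your attempt has to stand on its own.

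Your setup is sound and matches the standard machinery (it is essentially Lemma~\ref{lemma:rank-1} applied to $W(G)\trans$): from $M\trans V_1 S = \ell\, W(H)V_2^{-1}$ and $p\nmid d_i$ for $i<n$ you correctly get $M\trans = yx\trans + p^k P$ with $x$ the primitive last column of $V_1^{-\mathrm{T}}$, and the identity $(y\trans y)xx\trans + p^k(P\trans y\,x\trans + x\,y\trans P) + p^{2k}P\trans P = p^{2k}\ell'^2 I$ is correct. Your key structural claim about $P$ is also right, though you state it without justification: since $P=[C_1,\dots,C_{n-1},0]V_1^{-1}$ with the $C_i$ integral, one has $P\trans y = V_1^{-\mathrm{T}}(C_1\trans y,\dots,C_{n-1}\trans y,0)\trans$, so $P\trans y$ really does lie in the span of the first $n-1$ columns of $V_1^{-\mathrm{T}}$, and the bootstrap legitimately yields $p^{2k}\mid y\trans y$ and $p^{k}\mid P\trans y$ after $k$ rounds.

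The genuine gap is the endgame. Precisely when "the $p^{2k}P\trans P$ term switches on," i.e.\ at modulus $p^{2k+1}$, dividing by $p^{2k}$ leaves
\begin{equation*}
c\,xx\trans + wx\trans + xw\trans + P\trans P \equiv \ell'^2 I \pmod{p},
\end{equation*}
with $w=P\trans y/p^k$ in the span of the first $n-1$ columns of $V_1^{-\mathrm{T}}$. Conjugating by $V_1\trans$ turns the left side into $c\,e_ne_n\trans + \zeta e_n\trans + e_n\zeta\trans + \Gamma$ with $(\zeta)_n=0$ and $\Gamma$ having zero last row and column, and the right side into $\ell'^2 V_1\trans V_1$; this merely identifies $c$, $\zeta$, and $\Gamma$ with the corresponding blocks of $\ell'^2 V_1\trans V_1$ and is perfectly satisfiable. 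So your claim that "the rank-bounded left-hand side cannot reproduce the full-rank identity" is false --- the left side is not rank-bounded below $n$ once $P\trans P$ participates --- and no contradiction is produced. The proof cannot close on one-sided divisibility alone: the argument in the cited source needs the exact-order information you set up but never use (regularity gives $p^k\parallel v\trans e$ and $p^{2k}\parallel v\trans v$ for a column $v$ of $M$ with $v\not\equiv 0\bmod p$, exactly as in Lemma~\ref{lemma:q-props}), together with the mirror-image rank-one structure of $M$ coming from $MW(H)=\ell W(G)$. As written, the proposal is an honest plan whose terminal step fails.
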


\section{Arithmetic Constraints on Matrices in $O_n(\mathbb{Q})$}
\label{section:onq}


In this section, we establish several arithmetic properties of matrices in $O_n(\mathbb{Q})$. Since $\Gamma(G) \subseteq O_n(\mathbb{Q})$, these properties apply to all matrices arising from generalized cospectral graphs and are necessary for the proof of Theorem~\ref{theorem:level-unique}. We begin by identifying conditions under which the solution space of a linear system modulo $p^k$ is one-dimensional.

\begin{lemma}
\label{lemma:rank-1}
Let $M$ be a full-rank integral matrix of order $n$. If a prime $p$ and integer $k\ge 1$ satisfy $p^k \mid d_n(M)$ and $\operatorname{rank}_p(M)=n-1$, then every solution to $M v \equiv 0 \pmod{p^k}$ is a scalar multiple of a fixed vector $w \not\equiv 0 \pmod{p}$ over the ring $\mathbb{Z}/p^k\mathbb{Z}$.
\end{lemma}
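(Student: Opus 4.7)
The plan is to diagonalize the congruence $Mv \equiv 0 \pmod{p^k}$ using the Smith Normal Form. Writing $M = V_1 S V_2$ with $V_1,V_2$ unimodular and $S = \operatorname{diag}(d_1(M),\ldots,d_n(M))$, I multiply on the left by $V_1^{-1}$ (which is integral and invertible modulo $p^k$) to reduce the system to $S u \equiv 0 \pmod{p^k}$, where $u := V_2 v$. The change of variables $v \mapsto u$ is a bijection on $(\mathbb{Z}/p^k\mathbb{Z})^n$ because $V_2$ is unimodular, so it suffices to describe the solutions in the $u$-coordinates and then transform back.

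Next, I translate the rank hypothesis into a sharp statement on the diagonal entries. Since $\operatorname{rank}_p(M)$ equals the number of invariant factors coprime to $p$, and the $d_i(M)$ form a divisibility chain, $\operatorname{rank}_p(M) = n-1$ forces $p \nmid d_{n-1}(M)$ while $p \mid d_n(M)$. Combined with the hypothesis $p^k \mid d_n(M)$, the component-wise congruences $d_i(M)\, u_i \equiv 0 \pmod{p^k}$ then split cleanly: for each $i \le n-1$, the coefficient $d_i(M)$ is a unit in $\mathbb{Z}/p^k\mathbb{Z}$, which forces $u_i \equiv 0 \pmod{p^k}$; for $i = n$, the coefficient $d_n(M)$ is itself $\equiv 0 \pmod{p^k}$, so $u_n$ is unconstrained.

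Transforming back gives $v \equiv u_n \cdot V_2^{-1} e_n \pmod{p^k}$, so every solution is a scalar multiple over $\mathbb{Z}/p^k\mathbb{Z}$ of the fixed integer vector $w := V_2^{-1} e_n$, the last column of $V_2^{-1}$. To finish, I need $w \not\equiv 0 \pmod{p}$: if it were, the last column of the unimodular matrix $V_2^{-1}$ would be entrywise divisible by $p$, forcing $p \mid \det V_2^{-1} = \pm 1$, a contradiction. I do not expect a substantive obstacle; the argument is a routine SNF manipulation. The one delicate point is correctly extracting from $\operatorname{rank}_p(M)=n-1$ that $p\nmid d_{n-1}(M)$ and, crucially, that the full prime power $p^k$ divides $d_n(M)$, which is precisely what leaves the last coordinate $u_n$ completely free modulo $p^k$ and thereby produces a one-dimensional solution module generated by a vector nonzero modulo $p$.
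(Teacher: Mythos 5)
Your proposal is correct and follows essentially the same route as the paper: reduce via the Smith Normal Form to a diagonal congruence, use $\operatorname{rank}_p(M)=n-1$ together with $p^k\mid d_n(M)$ to free only the last coordinate, and take $w$ to be the last column of $V_2^{-1}$, with unimodularity giving $w\not\equiv 0\pmod p$. No differences worth noting.
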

\begin{proof}
Let $M = V_1 S V_2$ be the SNF of $M$. Since $V_1$ is unimodular, it is invertible over $\mathbb{Z}/p^k\mathbb{Z}$. Therefore, the solutions to $M v \equiv 0 \pmod{p^k}$ are equivalent to the solutions of
$$S V_2 v \equiv 0 \pmod{p^k}.$$
Let $v' = V_2 v$. The system becomes $S v' \equiv 0 \pmod{p^k}$.

Since $\operatorname{rank}_p(M)=n-1$, it follows that $p \nmid d_i(M)$ for $i = 1, 2, \ldots, n-1$. Thus, the first $n-1$ entries of $v'$ must be congruent to zero modulo $p^k$. The last entry of $v'$, however, can take any value in $\mathbb{Z}/p^k\mathbb{Z}$. Therefore, the solutions to $S v' \equiv 0 \pmod{p^k}$ are of the form
\begin{equation*}
v' \equiv y w' \pmod{p^k}.
\end{equation*}
where $y$ is any integer in $\mathbb{Z}/p^k\mathbb{Z}$ and $w' = [0, \dots, 0, 1]\trans$.

Since $V_2$ is unimodular, the inverse $V_2^{-1}$ exists and is also unimodular. Multiplying both sides of the relation $v' = V_2 v$ by $V_2^{-1}$, we obtain $v = V_2^{-1} v'$. Substituting the solution $v' \equiv y w' \pmod{p^k}$, we obtain:
$$v \equiv y V_2^{-1} w' \pmod{p^k}.$$

Define $w = V_2^{-1} w'$. Note that $w$ is the last column of the matrix $V_2^{-1}$. We claim that $w \not\equiv 0 \pmod p$. If $w \equiv 0 \pmod p$, then every entry in the last column of $V_2^{-1}$ would be divisible by $p$. Consequently, the determinant of $V_2^{-1}$ would be divisible by $p$. This contradicts the fact that $\det{V_2^{-1}} = \pm 1$. Hence, $w \not\equiv 0 \pmod p$, and every solution $v$ to $M v \equiv 0 \pmod{p^k}$ is a scalar multiple of $w$ over $\mathbb{Z}/p^k\mathbb{Z}$.
\end{proof}

In the next lemma, we explore the properties of the column vectors of the matrices in $O_n(\mathbb{Q})$.

\begin{lemma}
\label{lemma:q-props}
Let $Q \in O_n(\mathbb{Q})$, $\bar{Q} = \ell(Q)Q$ and $v_1, v_2, \dots, v_n$ be the column vectors of $\bar{Q}$. Let $p$ be a prime and $k$ be a positive integer such that $p^k \parallel \ell(Q)$. Then:
\begin{enumerate}
\item $v_i\trans v_j \equiv 0 \pmod{p^{2k}}$ for all $i, j$;
\item $p^{2k} \parallel v_i\trans v_i$ for all $i$;
\item $p^{k} \parallel v_i\trans e$ for all $i$.
\end{enumerate} 
\end{lemma}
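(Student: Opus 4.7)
The plan is to extract everything from two facts about $Q$: the orthogonality relation $Q\trans Q = I$ and the regularity relation $Qe = e$, both of which scale cleanly to $\bar{Q} = \ell(Q)Q$. Writing $\ell = \ell(Q)$, orthogonality gives $\bar{Q}\trans \bar{Q} = \ell^2 I$, so the $(i,j)$ entry of this product reads $v_i\trans v_j$, yielding $v_i\trans v_j = 0$ for $i\ne j$ and $v_i\trans v_i = \ell^2$ for every $i$.

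From here, the hypothesis $p^k \parallel \ell$ immediately yields $p^{2k} \parallel \ell^2$, so for the diagonal case $v_i\trans v_i = \ell^2$ we get part 2, while for $i \ne j$ the inner product $v_i\trans v_j = 0$ is trivially congruent to $0 \pmod{p^{2k}}$; combined, these give part 1.

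For part 3, I would observe that since $Q$ is orthogonal, $Q\trans = Q^{-1}$, so $Qe = e$ forces $Q\trans e = Q^{-1} e = e$ as well. Scaling gives $\bar{Q}\trans e = \ell e$, and the $i$-th coordinate of $\bar{Q}\trans e$ is exactly $v_i\trans e$. Hence $v_i\trans e = \ell$, and $p^k \parallel \ell$ gives $p^k \parallel v_i\trans e$, completing the argument.

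The proof is essentially a direct algebraic verification, so there is no genuine obstacle. The only thing worth flagging is the mild subtlety in part 3, namely that one needs $e$ to be a fixed vector of $Q\trans$ and not just of $Q$; this is where the combination of regularity with orthogonality (as opposed to either alone) is used.
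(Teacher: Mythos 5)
Your proof is correct and follows essentially the same route as the paper: both derive $\bar{Q}\trans\bar{Q}=\ell^2 I$ from orthogonality for parts (1) and (2), and both use $Q\trans e=e$ (regularity combined with orthogonality) to get $v_i\trans e=\ell$ for part (3). The subtlety you flag about needing $e$ to be fixed by $Q\trans$ is handled the same way in the paper.
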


\begin{proof}
Let $\ell = \ell(Q)$. Since $Q$ is orthogonal, $Q\trans Q = I$, and therefore $\bar{Q}\trans \bar{Q} = (\ell Q)\trans (\ell Q) = \ell^2 I.$ As the $(i,j)$-th entry of $\bar Q\trans \bar Q$ is $v_i\trans v_j$, it follows that
$$v_i\trans v_j = \begin{cases}
\ell^2 & \text{if } i = j, \\
0 & \text{if } i \neq j. \end{cases}
$$
Since $p^k \parallel \ell$, we have, $p^{2k} \parallel \ell^2$. It follows that $v_i\trans v_j\equiv0\pmod{p^{2k}}$ for all $i, j$ while $p^{2k} \parallel v_i\trans v_i$ for all $i$. This proves (1) and (2).

To prove (3), since $Q$ is regular and orthogonal, we have $Q\trans e=e$. Multiplying by $\ell$ gives $\bar{Q}\trans e = \ell e.$
The $i^\text{th}$ component of $\bar{Q}\trans e$ is $v_i\trans e = \ell$. Since $p^k \parallel \ell$, we conclude that $p^k \parallel v_i\trans e$.
\end{proof}

The following lemmas identify conditions under which integer vectors associated with matrices in $O_n(\mathbb{Q})$ are orthogonal over $\mathbb{Z}/p^k\mathbb{Z}$.

\begin{lemma}
\label{lemma:uv}
Let $u$ and $v$ be $n$-dimensional integer vectors, and let $p$ be an odd prime and $k$ be a positive integer. If the following conditions hold:
\begin{enumerate}
\item $u \equiv \alpha v \pmod{p^k}$ where $\alpha$ is an integer not divisible by $p$,
\item $u\trans u \equiv v\trans v \equiv 0 \pmod{p^{2k}}$,
\end{enumerate}
then $u\trans v \equiv 0 \pmod{p^{2k}}$.
\end{lemma}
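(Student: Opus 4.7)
The plan is to convert the congruence hypothesis into an equality by writing $u = \alpha v + p^k w$ for some integer vector $w$, and then to exploit the two norm congruences to extract the missing cross-term information about $v\trans w$.

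Explicitly, I would first substitute this expression into $u\trans u$ to obtain
$$u\trans u \;=\; \alpha^2\, v\trans v \;+\; 2\alpha p^k\, v\trans w \;+\; p^{2k}\, w\trans w.$$
By hypothesis (2), both $u\trans u$ and $v\trans v$ are divisible by $p^{2k}$, and the final term is obviously divisible by $p^{2k}$. Rearranging, this forces $2\alpha p^k\, v\trans w \equiv 0 \pmod{p^{2k}}$, equivalently $2\alpha\, v\trans w \equiv 0 \pmod{p^k}$. Here is where the hypotheses that $p$ is odd and $p \nmid \alpha$ play their role: both $2$ and $\alpha$ are units in $\mathbb{Z}/p^k\mathbb{Z}$, so I can cancel them to deduce $v\trans w \equiv 0 \pmod{p^k}$.

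With this divisibility in hand, I would finish by computing
$$u\trans v \;=\; \alpha\, v\trans v \;+\; p^k\, w\trans v.$$
The first term is divisible by $p^{2k}$ by hypothesis (2), and the second term is divisible by $p^k \cdot p^k = p^{2k}$ by the intermediate conclusion just obtained. This yields $u\trans v \equiv 0 \pmod{p^{2k}}$, as required.

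This proof is essentially a one-line computation once the decomposition $u = \alpha v + p^k w$ is set up, so there is no substantial obstacle. The only point that requires care is verifying that the argument genuinely needs both the parity hypothesis on $p$ and the coprimality of $\alpha$ with $p$: the parity is used to invert $2$ modulo $p^k$, and the coprimality of $\alpha$ is used to invert $\alpha$ modulo $p^k$. If either assumption failed, one would only recover $v\trans w \equiv 0$ modulo a proper divisor of $p^k$, which would be insufficient to push $p^k w\trans v$ up to divisibility by $p^{2k}$ in the final step.
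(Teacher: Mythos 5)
Your proof is correct and follows essentially the same route as the paper: the paper expands $(u-\alpha v)\trans(u-\alpha v)\equiv 0 \pmod{p^{2k}}$ and cancels the unit $2\alpha$ modulo $p^{2k}$ to get $u\trans v\equiv 0$ directly, which is the same computation as your expansion of $u\trans u$ with $u=\alpha v+p^k w$, merely reorganized through the intermediate step $v\trans w\equiv 0\pmod{p^k}$. Both arguments rely on exactly the same use of the hypotheses, so there is nothing to flag.
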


\begin{proof}
From condition (1), we have:
$$u - \alpha v \equiv 0 \pmod{p^{k}},$$
Taking the inner product of both sides with itself gives
$$(u - \alpha v)\trans (u - \alpha v) \equiv 0 \pmod{p^{2k}},$$
which expands to
$$u\trans u - 2 \alpha u\trans v + \alpha^2 v\trans v \equiv 0 \pmod{p^{2k}}.$$
Since $u\trans u$ and $v\trans v$ are both congruent to zero modulo $p^{2k}$, we have
$$-2 \alpha u\trans v \equiv 0 \pmod{p^{2k}}.$$
As $p$ is odd and $\alpha$ is not divisible by $p$, it follows that $2 \alpha$ is invertible in $\mathbb{Z}/p^{2k}\mathbb{Z}$. Hence $u\trans v \equiv 0 \pmod{p^{2k}}.$
\end{proof}

\begin{lemma}
\label{lemma:uv-2}
Let $u$ and $v$ be $n$-dimensional integer vectors, and let $k$ be a positive integer. If the following conditions hold:
\begin{enumerate}
\item $u \equiv \alpha v \pmod{2^k}$ where $\alpha$ is an odd integer,
\item $\operatorname{ord}_2(u\trans u) = \operatorname{ord}_2(v\trans v) = 2k$,
\item $\operatorname{ord}_2(u\trans e) = \operatorname{ord}_2(v\trans e) = k$,
\end{enumerate}
then $u\trans v \equiv 0 \pmod{2^{2k}}$.
\end{lemma}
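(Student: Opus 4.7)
The plan is to mirror the argument of Lemma~\ref{lemma:uv} but track parities one level deeper, using condition (3) to compensate for the single factor of $2$ that is lost when working at the prime $p=2$. Writing $u-\alpha v = 2^k w$ for some integer vector $w$ (by condition (1)), and expanding the inner product, I would obtain
\begin{equation*}
u\trans u - 2\alpha\, u\trans v + \alpha^2 v\trans v \;=\; 2^{2k}\, w\trans w.
\end{equation*}
From condition (2), I can write $u\trans u = 2^{2k} a$ and $v\trans v = 2^{2k} b$ with $a,b$ odd. Substituting and solving for $u\trans v$ gives
\begin{equation*}
u\trans v \;=\; \frac{2^{2k-1}\bigl(a + \alpha^2 b - w\trans w\bigr)}{\alpha}.
\end{equation*}

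Since $\alpha$ is odd and $u\trans v$ is an integer, the key remaining task is to prove that the bracket $a + \alpha^2 b - w\trans w$ is even; this will upgrade the visible divisibility from $2^{2k-1}$ to $2^{2k}$. Because $a$, $b$, and $\alpha^2$ are all odd, $a+\alpha^2 b$ is automatically even, so the whole question reduces to showing that $w\trans w$ is even.

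To handle $w\trans w \pmod 2$, I would use the identity $x^2\equiv x\pmod 2$ componentwise, which yields $w\trans w \equiv w\trans e \pmod 2$. This is where condition (3) enters: by the definition of $w$,
\begin{equation*}
w\trans e \;=\; \frac{u\trans e - \alpha\, v\trans e}{2^k},
\end{equation*}
and condition (3) lets me write $u\trans e = 2^k c$ and $v\trans e = 2^k d$ with $c,d$ both odd. Hence $w\trans e = c - \alpha d$, a difference of two odd integers, and therefore even. This gives $w\trans w$ even, which closes the parity gap and yields $u\trans v \equiv 0\pmod{2^{2k}}$.

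The main obstacle, in contrast to the odd-prime case of Lemma~\ref{lemma:uv}, is precisely this loss of one power of $2$ upon dividing by $2\alpha$; condition (3), which has no analogue in Lemma~\ref{lemma:uv}, is specifically what supplies the missing factor through the $w\trans w \equiv w\trans e \pmod 2$ reduction. The only point requiring a little care is ensuring that after showing the numerator is divisible by $2 \cdot \alpha$, the resulting quotient is indeed an integer divisible by $2^{2k}$; this follows automatically from $u\trans v \in \mathbb{Z}$ together with $\alpha$ being odd, so no further manipulation is needed.
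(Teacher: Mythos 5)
Your proposal is correct and follows essentially the same argument as the paper's proof: write $u-\alpha v=2^k w$, reduce the problem to showing $w\trans w$ is even, and use the componentwise identity $x^2\equiv x\pmod 2$ together with condition (3) to get $w\trans w\equiv w\trans e = c-\alpha d\equiv 0\pmod 2$. The only cosmetic difference is that you divide by $\alpha$ explicitly, whereas the paper keeps $-\alpha\, u\trans v$ on one side and invokes the invertibility of $\alpha$ in $\mathbb{Z}/2^{2k}\mathbb{Z}$.
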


\begin{proof}
Based upon our assumptions, we have: (i) $u\trans u = 2^{2k} a$, (ii) $v\trans v = 2^{2k} b$. (iii) $u\trans e = 2^{k} c$, (iv) $v\trans e = 2^{k} d$, where $a$, $b$, $c$, and $d$ are odd integers. From condition (1), we have
\begin{equation}
\label{eq:uv-2-1}
u - \alpha v = 2^{k} m,
\end{equation}
for some integer vector $m$. Multiplying each side by $e\trans$ gives
$$e\trans u - \alpha e\trans v = 2^k e\trans m.$$
Substituting the values of $e\trans u$ and $e\trans v$ gives
$$2^k c - \alpha (2^k d) = 2^k e\trans m,$$
which simplifies to
$$c - \alpha d = e\trans m.$$
Note that since $c$ and $\alpha d$ are both odd, their difference $c - \alpha d$ is even. Thus, $e\trans m$ is even. Moreover, we can express $m\trans m$ as
\begin{align*}
m\trans m &= \sum_{i=1}^{n} m_i^2 \\
&\equiv \sum_{i=1}^{n} m_i \\
&\equiv e\trans m \pmod{2},
\end{align*}
where $m_i$ is the $i^\text{th}$ entry of $m$. Since $e\trans m$ is even, it follows that $m\trans m$ is also even. 

Now, taking the inner product of both sides of Eq.~\eqref{eq:uv-2-1} with itself gives:
\begin{align*}
(u - \alpha v)\trans (u - \alpha v) &= (2^{k} m)\trans (2^{k} m) \\
u\trans u - 2 \alpha u\trans v + \alpha^2 v\trans v &= 2^{2k} m\trans m.
\end{align*}
Substituting the values of $u\trans u$ and $v\trans v$ gives:
\begin{align*}
2^{2k} a - 2 \alpha u\trans v + \alpha^2 (2^{2k} b) &= 2^{2k} m\trans m \\
-2 \alpha u\trans v &= 2^{2k} (m\trans m - (a + \alpha^2 b)) \\
-\alpha u\trans v &= 2^{2k-1} (m\trans m - (a + \alpha^2 b)).
\end{align*}
Note that since $a$ and $\alpha^2 b$ are both odd, their sum $a + \alpha^2 b$ is even. Since $m\trans m$ is also even, it follows that $m\trans m - (a + \alpha^2 b)$ is even. Thus, we conclude
$$-\alpha u\trans v \equiv 0 \pmod{2^{2k}}.$$
Since $\alpha$ is odd, it is invertible in $\mathbb{Z}/2^{2k}\mathbb{Z}$. Thus, $u\trans v \equiv 0 \pmod{2^{2k}}$.
\end{proof}

We now show that the common column structure of two matrices in $O_n(\mathbb{Q})$ over $\mathbb{Z}/p^k\mathbb{Z}$ forces $p$ out of the level of their product.

\begin{lemma}
\label{lemma:p-out}
Let $Q_1, Q_2 \in O_n(\mathbb{Q})$ and let $p$ be a prime. Suppose that 
$\operatorname{ord}_p(\ell(Q_i)) = k \geq 1$ for $i=1,2$, and that there exists a vector $w \not\equiv 0 \pmod{p}$ such that every column 
of $\ell(Q_i)Q_i$ is a scalar multiple of $w$ 
over $\mathbb{Z}/p^k\mathbb{Z}$. Then $p \nmid \ell(Q_1\trans Q_2)$.
\end{lemma}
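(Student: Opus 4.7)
The plan is to reduce the statement to showing $p^{2k}\mid u_i\trans v_j$ for every column $u_i$ of $\bar Q_1:=\ell(Q_1)Q_1$ and $v_j$ of $\bar Q_2:=\ell(Q_2)Q_2$. This suffices because the $(i,j)$ entry of $Q_1\trans Q_2$ equals $(u_i\trans v_j)/(\ell(Q_1)\ell(Q_2))$ and $\operatorname{ord}_p(\ell(Q_1)\ell(Q_2))=2k$, so $Q_1\trans Q_2$ has $p$-integral entries (equivalently, $p\nmid\ell(Q_1\trans Q_2)$) precisely when all $u_i\trans v_j$ are divisible by $p^{2k}$. Writing $u_i\equiv\alpha_iw\pmod{p^k}$ and $v_j\equiv\beta_jw\pmod{p^k}$, I would first secure a pivot index $i_0$ with $\alpha_{i_0}$ a unit modulo $p$, and symmetrically $j_0$ with $\beta_{j_0}$ a unit: if every $\alpha_i$ were divisible by $p$, then $\bar Q_1\equiv 0\pmod p$, so $(\ell(Q_1)/p)Q_1$ would be integral, contradicting the minimality of $\ell(Q_1)$.

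The main argument proceeds by cases on the $p$-divisibility of $\alpha_i$ and $\beta_j$. In the base case (both coprime to $p$), the relation $u_i\equiv(\alpha_i\beta_j^{-1})v_j\pmod{p^k}$ holds with a unit coefficient, and the remaining hypotheses of Lemma~\ref{lemma:uv} (for odd $p$) or Lemma~\ref{lemma:uv-2} (for $p=2$) are supplied by Lemma~\ref{lemma:q-props} applied to $\bar Q_1$ and $\bar Q_2$; this yields $u_i\trans v_j\equiv 0\pmod{p^{2k}}$ directly. When $\alpha_i$ is divisible by $p$ but $\beta_j$ is a unit, I would pick an integer $c\equiv\alpha_i\alpha_{i_0}^{-1}\pmod{p^k}$ and write $u_i=cu_{i_0}+p^k\phi$ for an integer vector $\phi$. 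Then
\[
u_i\trans v_j \;=\; c\,u_{i_0}\trans v_j \;+\; p^k\,\phi\trans v_j.
\]
The first term is divisible by $p^{2k}$ by the base case, and for the second I would use $v_j\equiv\beta_j\alpha_{i_0}^{-1}u_{i_0}\pmod{p^k}$ to reduce $\phi\trans v_j\pmod{p^k}$ to $\phi\trans u_{i_0}\pmod{p^k}$, then invoke the orthogonality $u_i\trans u_{i_0}=0$ together with $u_{i_0}\trans u_{i_0}=\ell(Q_1)^2$ inside $\bar Q_1$ to obtain $\phi\trans u_{i_0}=-cp^km_1^2\equiv 0\pmod{p^k}$, where $m_1=\ell(Q_1)/p^k$.

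The symmetric case (with $\beta_j$ divisible by $p$ and $\alpha_i$ a unit) is analogous, using a pivot $v_{j_0}$ on the $\bar Q_2$ side. The final case (both $\alpha_i$ and $\beta_j$ divisible by $p$) reuses the expansion $u_i=cu_{i_0}+p^k\phi$, but now $u_{i_0}\trans v_j$ is handled by the mixed case just proved rather than by the base case, while $\phi\trans v_j\equiv 0\pmod{p^k}$ is shown exactly as before. The main obstacle I expect is the bookkeeping in the mixed cases: Lemmas~\ref{lemma:uv} and \ref{lemma:uv-2} apply cleanly only when both column coefficients are units modulo $p$, so the pivot columns must bridge this gap, and one must track the $p$-adic orders of $\alpha_i,\beta_j,c$, and $w\trans w$ carefully while invoking the orthogonality of $\bar Q_l$ at precisely the right moment to kill the residual $p^k\phi\trans v_j$ term.
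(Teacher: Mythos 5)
Your proposal is correct and follows essentially the same route as the paper's proof: reduce to showing $p^{2k}\mid u_i\trans v_j$ for all column pairs, obtain pivot columns $u_{i_0},v_{j_0}$ not vanishing mod $p$ from the minimality of the level, apply Lemma~\ref{lemma:uv} or Lemma~\ref{lemma:uv-2} (with hypotheses supplied by Lemma~\ref{lemma:q-props}) to unit-coefficient pairs, and kill the residual $p^k$-correction terms via the orthogonality relations inside each $\bar Q_i$. The only difference is organizational: the paper decomposes both columns simultaneously against a single pivot pair and expands $u_i\trans v_j$ once, whereas you split into cases on the $p$-divisibility of the coefficients $\alpha_i,\beta_j$; the underlying computations coincide.
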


\begin{proof}
Let $\bar{Q}_i=\ell(Q_i)Q_i$ and let $R = \bar{Q}_1\trans \bar{Q}_2$. Since $\bar{Q}_1$ and $\bar{Q}_2$ are integral, $R$ is an integral matrix with entries $r_{ij} = u_i\trans v_j$, where $u_i$ and $v_j$ denote the $i^\text{th}$ and $j^\text{th}$ columns of $\bar{Q}_1$ and $\bar{Q}_2$, respectively. We show that $r_{ij} \equiv 0 \pmod{p^{2k}}$ for all $i, j$. By the definition of level, there exist column vectors $u'$ of $\bar{Q}_1$ and $v'$ of $\bar{Q}_2$ such that $u' \not\equiv 0 \pmod{p}$ and $v' \not\equiv 0 \pmod{p}$. Since both $u'$ and $v'$ are also scalar multiples of $w$ over $\mathbb{Z}/p^k\mathbb{Z}$, there exists an integer $\alpha \not\equiv 0\pmod{p}$ such that:
\begin{equation}
\label{eq:u'-v'}
u' \equiv \alpha v' \pmod{p^{k}},
\end{equation}
Furthermore, since every column vector $u_i$ and $v_j$ is a scalar multiple of $w$ over $\mathbb{Z}/p^k\mathbb{Z}$, we can express them as:
\begin{equation}
\label{eq:ui}
u_i \equiv c_i u' + p^{k}x_i \pmod{p^{2k}},
\end{equation}
\begin{equation}
\label{eq:vi}
v_j \equiv d_j v' + p^{k}y_j \pmod{p^{2k}},
\end{equation}
where $c_i, d_j$ are integers and $x_i, y_j$ are integer vectors.

Since $Q_1$ and $Q_2$ satisfy the conditions of Lemma~\ref{lemma:q-props}, it follows that $u_i\trans u' \equiv 0 \pmod{p^{2k}}$ and $v_j\trans v' \equiv 0\pmod{p^{2k}}$ for each column $u_i$ and $v_j$. Moreover, $\operatorname{ord}_p({u'}\trans u') = \operatorname{ord}_p({v'}\trans v') = 2k$ and $\operatorname{ord}_p({u'}\trans e) = \operatorname{ord}_p({v'}\trans e) = k$. Therefore, we can apply Lemma~\ref{lemma:uv} (for odd primes) or Lemma~\ref{lemma:uv-2} (for $p=2$) to establish that ${u'}\trans v' \equiv 0 \pmod{p^{2k}}$.

Multiplying Eq.~\eqref{eq:ui} by ${u'}\trans$ yields:
\begin{align*}
{u'}\trans u_i &\equiv {u'}\trans (c_i u' + p^{k}x_i) \pmod{p^{2k}} \\
&\equiv c_i {u'}\trans u' + p^{k}{u'}\trans x_i \pmod{p^{2k}}
\end{align*}
As ${u'}\trans u_i \equiv 0 \pmod{p^{2k}}$ and $c_i {u'}\trans u' \equiv 0 \pmod{p^{2k}}$, it follows that $p^{k} {u'}\trans x_i \equiv 0 \pmod{p^{2k}}$, which implies ${u'}\trans x_i \equiv 0 \pmod{p^{k}}$. Using Eq.~\eqref{eq:u'-v'}, we get ${u'}\trans x_i \equiv \alpha {v'}\trans x_i \pmod{p^{k}}$. As $\alpha \not\equiv 0 \pmod{p}$, we conclude that ${v'}\trans x_i \equiv 0 \pmod{p^{k}}$. Using a similar approach, we can prove that ${u'}\trans y_j \equiv 0 \pmod{p^{k}}$ for each $y_j$.

Finally, for any columns $u_i$ and $v_j$, we have:
\begin{align*}
u_i\trans v_j &\equiv (c_i u' + p^{k} x_i)\trans (d_j v' + p^{k} y_j) \\
&\equiv c_i d_j {u'}\trans v' + c_i p^{k} {u'}\trans y_j + d_j p^{k} x_i\trans v' + p^{2k} x_i\trans y_j \pmod{p^{2k}}.
\end{align*}
We established that ${u'}\trans y_j \equiv 0 \pmod{p^{k}}$ and ${v'}\trans x_i \equiv 0 \pmod{p^{k}}$. Since ${u'}\trans v' \equiv 0 \pmod{p^{2k}}$, every term on the right-hand side vanishes modulo $p^{2k}$. Thus, we conclude that $u_i\trans v_j \equiv 0 \pmod{p^{2k}}$ for all $i, j$, which implies $R \equiv 0 \pmod{p^{2k}}$. Substituting the definition of $R$, we have
$$\ell(Q_1)\ell(Q_2) Q_1\trans Q_2 \equiv 0 \pmod{p^{2k}}.$$
This implies that  $\frac{\ell(Q_1)\ell(Q_2)}{p^{2k}} Q_1\trans Q_2$ is an integral matrix, and hence $\ell(Q_1\trans Q_2) \mid \frac{\ell(Q_1)\ell(Q_2)}{p^{2k}}$. Since $p^k \parallel \ell(Q_1)$ and $p^k \parallel \ell(Q_2)$, it follows that $p^{2k} \parallel \ell(Q_1)\ell(Q_2)$. Consequently, $p \nmid \frac{\ell(Q_1)\ell(Q_2)}{p^{2k}}$, which implies $p \nmid \ell(Q_1\trans Q_2)$, completing the proof.

\end{proof}

Building on this, we show that if two matrices in $O_n(\mathbb{Q})$ share the same level and satisfy certain column conditions, then their product is a permutation matrix.

\begin{lemma}
\label{lemma:same-level}
Let $Q_1, Q_2 \in O_n(\mathbb{Q})$ with $\ell(Q_1)=\ell(Q_2)=\ell$, and let $\bar Q_i=\ell Q_i$ for $i=1,2$. Let
$$\ell=\prod_{j=1}^m p_j^{k_j}$$
be the prime factorization of $\ell$. Suppose that for each $j \in \{1, \dots, m\}$, there exists a vector $w_j \not\equiv 0\pmod {p_j}$ such that every column of both $\bar Q_1$ and $\bar Q_2$ is a scalar multiple of $w_j$ over $\mathbb Z/p_j^{k_j}\mathbb Z$. Then $Q_1\trans Q_2$ is a permutation matrix.
\end{lemma}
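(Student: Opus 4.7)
The plan is to combine Lemma~\ref{lemma:p-out}, applied separately to each prime factor of $\ell$, with a short argument that a regular integral orthogonal matrix is a permutation matrix. Set $R = Q_1\trans Q_2$. As a product of rational orthogonal matrices $R$ is orthogonal and rational, and it is regular since $R e = Q_1\trans(Q_2 e) = Q_1\trans e = e$; hence $R \in O_n(\mathbb{Q})$. The goal is to prove $\ell(R) = 1$.

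First, I would observe that $\bar Q_1\trans \bar Q_2 = \ell^2 R$ is an integral matrix, so $\ell(R) \mid \ell^2 = \prod_{j=1}^m p_j^{2k_j}$, which restricts the possible prime divisors of $\ell(R)$ to $\{p_1,\ldots,p_m\}$. Next, I would fix $j \in \{1,\ldots,m\}$ and apply Lemma~\ref{lemma:p-out} with $p = p_j$, $k = k_j$, and $w = w_j$: the condition $\operatorname{ord}_{p_j}(\ell(Q_i)) = k_j$ for $i=1,2$ is read off from the prime factorization of $\ell$, and the scalar-multiple condition is precisely the hypothesis supplied (crucially, with the same vector $w_j$ working for both $\bar Q_1$ and $\bar Q_2$). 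Lemma~\ref{lemma:p-out} then yields $p_j \nmid \ell(R)$ for every $j$, which together with the divisibility constraint above forces $\ell(R) = 1$. Consequently $R$ is an integral matrix.

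The concluding step is standard: each row of the integral orthogonal matrix $R$ has unit Euclidean norm, which with integer entries forces exactly one nonzero entry per row, equal to $\pm 1$; the regularity condition $R e = e$ then pins the sign to $+1$, so $R$ is a permutation matrix. I do not foresee a serious obstacle here, since the arithmetic heavy lifting has already been carried out in Lemma~\ref{lemma:p-out}; Lemma~\ref{lemma:same-level} functions as the clean assembly step that lifts the single-prime conclusion to the full prime factorization of $\ell$, with the sole technical point being to verify that the hypotheses of Lemma~\ref{lemma:p-out} transfer verbatim to each prime $p_j$ thanks to the shared witness vector $w_j$.
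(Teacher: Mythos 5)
Your proposal is correct and follows essentially the same route as the paper: bound $\ell(Q_1\trans Q_2)$ by $\ell^2$ via integrality of $\bar Q_1\trans \bar Q_2$, eliminate each prime $p_j$ with Lemma~\ref{lemma:p-out}, conclude $\ell(Q_1\trans Q_2)=1$, and finish with the standard fact that a regular integral orthogonal matrix is a permutation matrix. Your final step is just a slightly more explicit version of the paper's closing sentence.
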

\begin{proof}
Since $\bar{Q}_1$ and $\bar{Q}_2$ are integral, the product $\bar{Q}_1\trans \bar{Q}_2=\ell^2 Q_1\trans Q_2$ is an integral matrix. It follows from the definition of level that $\ell(Q_1\trans Q_2)$ divides $\ell^2$. For each prime factor $p_j$ of $\ell$, the assumptions of Lemma~\ref{lemma:p-out} are satisfied with $k=k_j$. Thus, $p_j \nmid \ell(Q_1\trans Q_2)$. Since $\ell(Q_1\trans Q_2)$ divides $\ell^2$ but contains no prime factor of $\ell$, it must be that $\ell(Q_1\trans Q_2) = 1$. Therefore, $Q_1\trans Q_2$ is an integral orthogonal matrix.

Finally, since $Q_1, Q_2 \in O_n(\mathbb{Q})$, it follows that $Q_1\trans Q_2 \in O_n(\mathbb{Q})$. An integral orthogonal matrix with row sums equal to 1 must be non-negative, which implies $Q_1\trans Q_2$ is a permutation matrix.
\end{proof}

In the next section, we apply these results to prove our main theorems.

\section{Proofs of Theorem \ref{theorem:level-unique}  and Theorem \ref{theorem:main}}
\label{section:proofs}

We begin by defining some auxiliary matrices that are critical for the proof of Theorem~\ref{theorem:level-unique}.

\begin{lemma}\cite{wang2017simple}
\label{lemma:wang-mod2}
Let $\phi(x) = x^n + c_1 x^{n-1} + \cdots + c_{n-1} x + c_n$ be the characteristic polynomial of a graph $G$ with adjacency matrix $A=A(G)$. Define
$$
M(G) :=
\begin{cases}
A^{n/2} + c_2 A^{n/2-1} + \cdots + c_{n-2} A + c_n I, & \text{if $n$ is even}, \\
A^{(n+1)/2} + c_2 A^{(n-1)/2} + \cdots + c_{n-3} A^2 + c_{n-1} A, & \text{if $n$ is odd}.
\end{cases}
$$
Then $M(G) e \equiv 0 \pmod{2}$, where $e$ is the all-ones vector.
\end{lemma}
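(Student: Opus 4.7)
The plan is to prove the stronger matrix identity $M(G)^2 \equiv 0 \pmod{2}$, from which the lemma will follow by the symmetry of $M(G)$. Since $M(G)$ is a polynomial in the symmetric matrix $A$, it is itself symmetric, so
\[ (M(G)^2)_{ii} = \sum_{k} M(G)_{ik}\, M(G)_{ki} = \sum_{k} M(G)_{ik}^2 \equiv \sum_{k} M(G)_{ik} = (M(G)e)_i \pmod{2}, \]
where the middle congruence uses $a^2 \equiv a \pmod{2}$. Once $M(G)^2 \equiv 0 \pmod{2}$ is established, every entry of $M(G)e$ is immediately seen to be even.

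First I would establish the auxiliary fact that $c_k \equiv 0 \pmod{2}$ for every odd index $k$. One route is Sachs' coefficient theorem, which expresses $c_k$ as a signed sum over elementary subgraphs on $k$ vertices weighted by powers of $2$; modulo $2$ only the contributions from matchings survive, and matchings have an even number of vertices, forcing $c_k \equiv 0 \pmod{2}$ for odd $k$. (Equivalently, Newton's identities combined with the observation that odd-length closed walks in a loopless graph pair up under reversal, so $\operatorname{tr}(A^j)$ is even for odd $j$, give the same conclusion by induction on $k$.) Split $\phi(x) = \phi_e(x) + \phi_o(x)$ into its even- and odd-power parts. Exactly one of these parts collects only the coefficients with odd index, namely $\phi_o$ when $n$ is even and $\phi_e$ when $n$ is odd; that part therefore vanishes modulo $2$. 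Cayley--Hamilton gives $\phi_e(A) + \phi_o(A) = 0$, so the complementary part is also $\equiv 0 \pmod{2}$.

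Next I would exploit the fact that the complementary part of $\phi$ is obtained from $M(G)$ by squaring the argument $A$. For $n$ even, with $f(y) = y^{n/2} + c_2\, y^{n/2-1} + \cdots + c_n$, one has $M(G) = f(A)$ and $\phi_e(A) = f(A^2)$; for $n$ odd, with $r(y) = y^{(n-1)/2} + c_2\, y^{(n-3)/2} + \cdots + c_{n-1}$, one has $M(G) = A \cdot r(A)$ and $\phi_o(A) = A \cdot r(A^2)$. The Frobenius identity $p(x)^2 \equiv p(x^2) \pmod{2}$, valid for every $p \in \mathbb{Z}[x]$ since $a^2 \equiv a \pmod{2}$, then provides the bridge: in the even case, $M(G)^2 = f(A)^2 \equiv f(A^2) = \phi_e(A) \equiv 0 \pmod{2}$; in the odd case, $M(G)^2 = A^2\, r(A)^2 \equiv A^2\, r(A^2) = A \cdot \phi_o(A) \equiv 0 \pmod{2}$. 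The main obstacle will be the even/odd-$n$ bookkeeping, since the Frobenius step must be lined up with the correct part of $\phi$ in each parity case; the reduction in the first paragraph then converts $M(G)^2 \equiv 0$ to $M(G)e \equiv 0$ in a single line.
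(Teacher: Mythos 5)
Your argument is correct. The paper does not prove this lemma --- it is quoted from Wang's 2017 paper --- and your proof is essentially a reconstruction of the standard argument there: the odd-indexed coefficients $c_k$ are even (Sachs/Harary), so mod $2$ the parity-appropriate part of $\phi$ vanishes by Cayley--Hamilton, the Frobenius identity in the commutative ring $\mathbb{F}_2[A]$ gives $M(G)^2\equiv 0\pmod 2$, and the symmetry of $M(G)$ together with $a^2\equiv a\pmod 2$ converts the vanishing of the diagonal of $M(G)^2$ into $M(G)e\equiv 0\pmod 2$. All three steps, including the even/odd-$n$ bookkeeping matching $M(G)=f(A)$ with $\phi_e(x)=f(x^2)$ and $M(G)=A\,r(A)$ with $\phi_o(x)=x\,r(x^2)$, check out.
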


Using this result, the integral matrix $\hat{W}(G)$ is defined as follows:

\begin{definition} \cite{qiu2023smith}
Let $G$ be a graph with the adjacency matrix $A=A(G)$ and the matrix $M=M(G)$. $\hat{W}(G)$ is defined as:
$$
\hat{W}(G) := \left[\, e,\, Ae,\, \ldots,\, A^{\lceil n/2 \rceil}e,\,
\frac{Me}{2},\,
\frac{AMe}{2},\, \ldots,\, \frac{A^{n-\lceil n/2 \rceil-1}Me}{2}
\,\right].
$$

\end{definition}

As observed by Qiu et al.~\cite{qiu2023smith}, if $H$ is generalized cospectral to $G$ and $Q \in \Gamma(G)$ such that $Q\trans A(G) Q = A(H)$, then $Q\trans M(G) e = M(H) e$ and consequently, $Q\trans \hat{W}(G)=\hat{W}(H)$. The following lemma relates the SNF of $\hat{W}(G)$ to that of the original walk matrix $W(G)$.

\begin{lemma}
\label{lemma:what}
\cite{qiu2023smith}
Let $G$ be a graph with walk matrix $W(G)$. If $\operatorname{rank}_2{W(G)} = \lceil \frac{n}{2} \rceil$, then for each $1 \leq i \leq n$, the invariant factors of $\hat{W}(G)$ satisfy:
$$
d_i(\hat{W}(G)) = 
\begin{cases}
d_i(W(G)) & \text{if } 1 \leq i \leq \left\lceil \frac{n}{2} \right\rceil, \\
\frac{d_i(W(G))}{2} & \text{if } \left\lceil \frac{n}{2} \right\rceil < i \leq n.
\end{cases}
$$
\end{lemma}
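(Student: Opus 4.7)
The plan is to relate $\hat W(G)$ to $W(G)$ by an explicit rational column transformation and then analyze the invariant factors prime by prime, with the substantive work occurring at $p=2$.

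First, since $M(G)$ is an integer polynomial in $A(G)$, the Cayley--Hamilton theorem reduces every $A^j M(G) e$ to an integer linear combination of $e,\,A(G)e,\,\ldots,\,A(G)^{n-1}e$. Collecting these relations, one obtains an $n\times n$ rational matrix $T$ with $\hat W(G)=W(G)\,T$, whose first $\lceil n/2\rceil$ columns are standard basis vectors (these columns of $\hat W$ coincide with columns of $W$) and whose remaining columns are integer vectors divided by $2$; the integrality of those halves is exactly what Lemma~\ref{lemma:wang-mod2} provides. A determinant comparison gives $\det T=\pm 2^{-(n-\lceil n/2\rceil)}$, which is consistent with halving precisely $n-\lceil n/2\rceil$ invariant factors.

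For any odd prime $p$, the scalar $1/2$ is a unit in $\mathbb{Z}_{(p)}$, so $T$ becomes unimodular over $\mathbb{Z}_{(p)}$ and the $p$-adic parts of the invariant factors of $\hat W$ and $W$ agree. In particular, the odd components of every $d_i$ are preserved, and since $d_1(W),\ldots,d_{\lceil n/2\rceil}(W)$ are all odd under the hypothesis $\operatorname{rank}_2(W)=\lceil n/2\rceil$, they carry through to $\hat W$ unchanged.

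The principal obstacle is the $2$-adic analysis. The hypothesis $\operatorname{rank}_2(W)=\lceil n/2\rceil$ pins down the $2$-adic shape of the SNF of $W$: exactly the first $\lceil n/2\rceil$ invariant factors are odd and the remaining $n-\lceil n/2\rceil$ are even. My plan is to choose bases of $\mathbb{Z}^n$ realizing the SNF of $W$ and, working in those coordinates, show that the adjoined vectors $\tfrac{1}{2}A^j M(G) e$ span, modulo the $\mathbb{Z}$-column lattice of $W$, an $\mathbb{F}_2$-space of dimension exactly $n-\lceil n/2\rceil$; equivalently, passing from $W$ to $\hat W$ kills the full $2$-torsion subgroup of $\operatorname{coker}(W)$, whose order is $2^{n-\lceil n/2\rceil}$. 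The equality (and not merely the inequality) $\operatorname{rank}_2(W)=\lceil n/2\rceil$ is used crucially here to rule out both over- and under-cancellation of factors of $2$. Combining this with the odd-prime analysis then yields $d_i(\hat W)=d_i(W)$ for $i\le\lceil n/2\rceil$ and $d_i(\hat W)=d_i(W)/2$ for $i>\lceil n/2\rceil$, as claimed.
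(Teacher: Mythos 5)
The paper does not prove this lemma at all; it is imported verbatim from Qiu et al.\ \cite{qiu2023smith}, so there is no in-paper argument to compare against. Judged on its own terms, your strategy is the right one and can be completed, but as written the decisive step is announced rather than carried out. The odd-prime reduction via localization and the determinant count $\det T=\pm 2^{-\lfloor n/2\rfloor}$ are fine (note that the paper's displayed definition of $\hat W$ has $n+1$ columns as printed; you are implicitly using the corrected version in which the first block is $e,\dots,A^{\lceil n/2\rceil-1}e$, which is what makes ``the first $\lceil n/2\rceil$ columns of $T$ are standard basis vectors'' true). The gap is the sentence ``my plan is to \ldots show that the adjoined vectors span \ldots exactly $n-\lceil n/2\rceil$'': that is precisely the content of the lemma at $p=2$, and a plan is not a proof.

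The good news is that you already possess everything needed to close it, in essentially three lines. First check that $T^{-1}$ is integral (it is, because $T$ is block upper triangular with identity and $\tfrac12 U$ on the diagonal, $U$ unipotent integral, and the off-diagonal block is half-integral), so the column lattices satisfy $\Lambda:=W\mathbb{Z}^n\subseteq\hat\Lambda:=\hat W\mathbb{Z}^n$ with index $|\det T^{-1}|=2^{\lfloor n/2\rfloor}$. Second, $2T$ is integral, so $2\hat\Lambda\subseteq\Lambda$ and $\hat\Lambda/\Lambda$ is an elementary abelian $2$-group, hence contained in the $2$-torsion subgroup $\{x\in\operatorname{coker}(W):2x=0\}$, which by the hypothesis $\operatorname{rank}_2(W)=\lceil n/2\rceil$ has order exactly $2^{\lfloor n/2\rfloor}$. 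Comparing cardinalities forces $\hat\Lambda/\Lambda$ to equal the full $2$-torsion subgroup; this is the ``no over- or under-cancellation'' you allude to, and it needs no choice of SNF-adapted basis. Finally, quotienting $\bigoplus_i\mathbb{Z}/d_i(W)$ by its $2$-torsion replaces each even $d_i$ by $d_i/2$ and leaves the odd ones alone, and one should verify (it is immediate) that the resulting chain $d_1\mid\cdots\mid d_{\lceil n/2\rceil}\mid d_{\lceil n/2\rceil+1}/2\mid\cdots\mid d_n/2$ still satisfies the divisibility condition, so these are genuinely the invariant factors of $\hat W$. With those additions your argument is a complete and rather clean proof of the cited lemma.
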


The next lemma establishes the $\operatorname{rank}_p(\hat{W}(G))$ for every prime factor $p$ of $d_n(\hat{W}(G))$ when $G \in \mathcal{F}_n$.

\begin{lemma}
\label{lemma:rank}
Let $G \in \mathcal{F}_n$, then for every prime $p$ dividing $d_n(\hat{W}(G))$, the $\operatorname{rank}_p(\hat{W}(G)) = n-1$.
\end{lemma}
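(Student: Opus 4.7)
The plan is to apply Lemma~\ref{lemma:what} together with the defining conditions of $\mathcal{F}_n$ and show that the invariant factors of $\hat{W}(G)$ collapse to $1$ for every index strictly less than $n$. Once this is established, the conclusion follows immediately because $\operatorname{rank}_p$ of an integral matrix equals the number of invariant factors not divisible by $p$.

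First, I would invoke the observation already made in Section~\ref{section:prelim} that for every $G \in \mathcal{F}_n$ we have $\operatorname{rank}_2(W(G)) = \lceil n/2 \rceil$ (this combined Lemma~\ref{lemma:wang-rank2} with the lower bound coming from $d_{\lceil n/2\rceil}(W(G))=1$). This is exactly the hypothesis needed to apply Lemma~\ref{lemma:what}, which then tells us that $d_i(\hat{W}(G)) = d_i(W(G))$ for $1 \leq i \leq \lceil n/2 \rceil$, and $d_i(\hat{W}(G)) = d_i(W(G))/2$ for $\lceil n/2 \rceil < i \leq n$.

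Second, I would pin down the invariant factors of $W(G)$ below index $n$. The chain $d_i(W(G)) \mid d_{i+1}(W(G))$ combined with $d_{n-1}(W(G)) = 2$ forces $d_i(W(G)) \in \{1,2\}$ for $1 \leq i \leq n-1$. The condition $d_{\lceil n/2 \rceil}(W(G)) = 1$ then forces $d_i(W(G)) = 1$ for all $i \leq \lceil n/2 \rceil$, so from Lemma~\ref{lemma:what} we get $d_i(\hat{W}(G)) = 1$ on this range. For $\lceil n/2 \rceil < i \leq n-1$, the quantity $d_i(\hat{W}(G)) = d_i(W(G))/2$ must be a positive integer (it is an invariant factor), which, given $d_i(W(G)) \in \{1,2\}$, forces $d_i(W(G)) = 2$ and hence $d_i(\hat{W}(G)) = 1$.

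Combining these two ranges yields $d_i(\hat{W}(G)) = 1$ for every $1 \leq i \leq n-1$, so for any prime $p$ dividing $d_n(\hat{W}(G))$ exactly $n-1$ invariant factors escape divisibility by $p$, giving $\operatorname{rank}_p(\hat{W}(G)) = n-1$. I do not expect a substantive obstacle here; the argument is essentially bookkeeping of invariant factors. The only point that requires a moment of care is checking that the halving in Lemma~\ref{lemma:what} is actually integer-valued on the middle range, which is precisely what forces $d_i(W(G)) = 2$ for $\lceil n/2\rceil < i \leq n-1$ and seals the argument.
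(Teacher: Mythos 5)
Your proposal is correct and follows essentially the same route as the paper: establish $\operatorname{rank}_2(W(G))=\lceil n/2\rceil$ from the $\mathcal{F}_n$ conditions together with Lemma~\ref{lemma:wang-rank2}, apply Lemma~\ref{lemma:what}, and conclude that $d_i(\hat{W}(G))=1$ for all $i\le n-1$. The paper states this last step without elaboration, whereas you justify the middle range $\lceil n/2\rceil<i\le n-1$ via integrality of the halved invariant factor (one could equivalently read off that these $d_i(W(G))$ are even directly from $\operatorname{rank}_2(W(G))=\lceil n/2\rceil$); either way the bookkeeping is sound.
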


\begin{proof}
As $G\in\mathcal{F}_n$, we established earlier that $\operatorname{rank}_2(W(G))=\lceil \frac{n}{2} \rceil$. Therefore, by Lemma~\ref{lemma:what}, the invariant factors of $\hat{W}(G)$ are given by:
$$d_i(\hat{W}(G)) =
\begin{cases}
1 & \text{if } 1 \leq i \leq n-1, \\
\frac{d_n(W(G))}{2} & \text{if } i=n.
\end{cases}$$
It follows that $\operatorname{rank}_p(\hat{W}(G))=n-1$ for every prime $p$ dividing $d_n(\hat{W}(G))$. This completes the proof.
\end{proof}

We can now proceed to the proof of Theorem~\ref{theorem:level-unique}.

\begin{proof}[Proof of Theorem~\ref{theorem:level-unique}]
Let the prime factorization of $\ell$ be given by

$$\ell = \prod_{j=1}^m p_j^{k_j}.$$
Let $\bar{Q}_i = \ell Q_i$ be the associated integral matrices. Since $Q_i\trans \hat{W}(G) = \hat{W}(H_i)$, by Lemma~\ref{lemma:level-dn}, $\ell(Q_i) \mid d_n(\hat{W}(G))$ and therefore $p_j^{k_j} \mid d_n(\hat{W}(G))$. Combined with Lemma~\ref{lemma:rank}, this gives $\operatorname{rank}_{p_j}(\hat{W}(G))=n-1$.

Additionally, transposing and multiplying by $\ell$ gives $\hat{W}(G)\trans \bar{Q}_i = \ell \hat{W}(H_i)\trans$. As $\hat{W}(H_i)$ is an integral matrix and $\ell\equiv0\pmod{p_j^{k_j}}$ for each prime $p_j$, it follows that $\hat{W}(G)\trans \bar{Q}_i \equiv 0 \pmod{p_j^{k_j}}.$ Consequently, for each $p_j$, the columns of $\bar{Q}_i$ are the solutions to a linear system satisfying the conditions of Lemma~\ref{lemma:rank-1}. Hence, there exists a vector $w_j \not\equiv 0 \pmod{p_j}$ such that the columns of both $\bar{Q}_1$ and $\bar{Q}_2$ are scalar multiples of $w_j$ over the ring $\mathbb{Z}/p_j^{k_j}\mathbb{Z}$.

The conditions of Lemma~\ref{lemma:same-level} are therefore satisfied, implying that $Q_1\trans Q_2 = P$ for some permutation matrix $P$. It follows that $Q_2 = Q_1 P$. By Lemma~\ref{lemma:permutation-matrix}, we conclude that $H_1$ is isomorphic to $H_2$.
\end{proof}

The proof of Theorem~\ref{theorem:main} follows as a direct consequence of Theorem~\ref{theorem:level-unique} by counting the possible levels of matrices in $\Gamma(G)$.

\begin{proof}[Proof of Theorem~\ref{theorem:main}]
As 2 must be a factor of $d_n(W(G))$, let the prime factorization of $d_n(W(G))$ be written as $2^{k_1} \prod_{j=2}^m p_j^{k_j}$, where $p_1=2$ and $p_2, \dots, p_m$ are odd primes. For any matrix $Q \in \Gamma(G)$, Lemma~\ref{lemma:level-dn} gives $\ell(Q) \mid d_n(\hat{W}(G))$. By Lemma \ref{lemma:what}, $d_n(\hat{W}(G))=\frac{d_n(W(G))}{2}$, so the multiplicity of 2 in $d_n(\hat{W}(G))$ is $k_1 - 1$, which implies $2^{k_1} \nmid \ell(Q)$.

Moreover, for each odd prime $p_j$ ($j \ge 2$), by Lemma~\ref{lemma:pk-out}, we know that $\ell(Q) \mid \frac{d_n(W(G))}{p_j}$. It follows that for each prime $p_j$, the exponent satisfies $\operatorname{ord}_{p_j}(\ell(Q)) \in \{0, 1, \dots, k_j-1\}$. Thus, there are exactly $k_j$ possible choices for the exponent of each prime factor. Let $L(G)$ denote the set of all possible values for $\ell(Q)$. The total number of these values is the product of the number of choices for each prime factor:
$$
|L(G)| = \prod_{j=1}^m k_j.
$$

We must exclude the trivial case of the level $\ell(Q)=1$, which corresponds to graphs isomorphic to $G$. By Theorem~\ref{theorem:level-unique}, each valid level corresponds to at most one generalized cospectral mate of $G$. Therefore, the number of generalized cospectral mates of $G$ is at most 
$$|L(G)| - 1 = \left(\prod_{j=1}^m k_j\right) - 1.$$
This completes the proof.
\end{proof}

\section{An Example and Numerical Evaluation}
\label{section:examples}

\begin{figure}[!t]
\centering
\begin{subfigure}[b]{0.49\linewidth}
\hspace{-10mm}\centering
\includegraphics[scale=1]{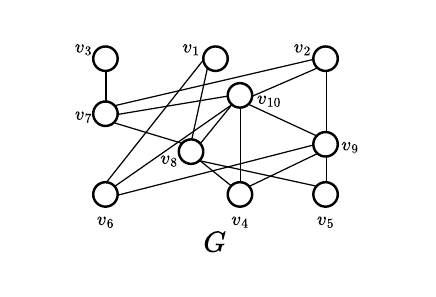}
\end{subfigure}
\begin{subfigure}[b]{0.49\linewidth}
\hspace{-10mm}\centering
\includegraphics[scale=1]{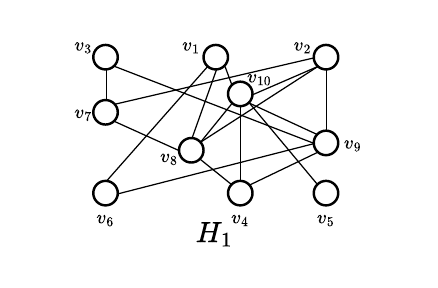}
\end{subfigure}
\begin{subfigure}[b]{0.49\linewidth}
\hspace{-10mm}\centering
\includegraphics[scale=1]{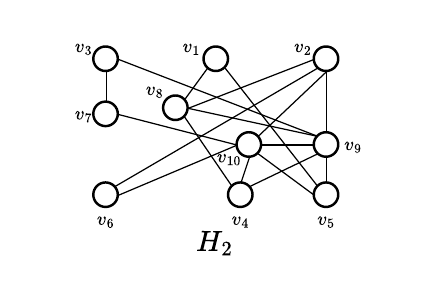}
\end{subfigure}
\begin{subfigure}[b]{0.49\linewidth}
\hspace{-10mm}\centering
\includegraphics[scale=1]{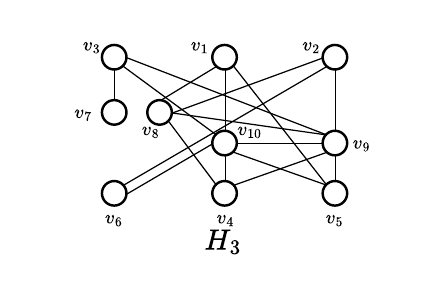}
\end{subfigure}
\caption{Graph $G$ and all of its generalized cospectral mates $H_1$, $H_2$ and $H_3$.}
\label{fig:example_tight}
\end{figure}

In this section, we present an example of a graph in $\mathcal{F}_n$ alongside experimental results.

\begin{example}
Let $n=10$. Consider the graph $G$ shown in Fig.~\ref{fig:example_tight}, with the adjacency matrix:
$$A(G)=
\begin{pmatrix}
0&0&0&0&0&1&0&1&0&0\\
0&0&0&0&0&0&1&0&1&1\\
0&0&0&0&0&0&1&0&0&0\\
0&0&0&0&0&0&0&1&1&1\\
0&0&0&0&0&0&0&1&1&0\\
1&0&0&0&0&0&0&0&1&1\\
0&1&1&0&0&0&0&1&0&1\\
1&0&0&1&1&0&1&0&0&1\\
0&1&0&1&1&1&0&0&0&1\\
0&1&0&1&0&1&1&1&1&0
\end{pmatrix}$$
The Smith Normal Form of the walk matrix $W(G)$ has invariant factors given by:
$$\operatorname{diag}(1, 1, 1, 1, 1, 2, 2, 2, 2, 19350)
.$$
As these factors satisfy Eq.~\eqref{eqn:Fn}, it follows that $G\in \mathcal{F}_n$. The prime factorization of the last invariant factor $d_n(W(G))$ is:

$$d_n(W(G)) = 2 \times 3^2 \times 5^2 \times 43.$$

According to Theorem~\ref{theorem:main}, $G$ can have at most $3$ generalized cospectral mates. By a brute-force search over all graphs of order $10$, we find that $G$ has exactly $3$ generalized cospectral mates, denoted $H_1, H_2,$ and $H_3$ (see Fig.~\ref{fig:example_tight}). Thus, $G$ is a tight example attaining the bound given in Theorem~\ref{theorem:main}.

Let $Q_1, Q_2, Q_3 \in \Gamma(G)$ be the orthogonal matrices satisfying $A(H_i) = Q_i\trans A(G) Q_i$ for $i=1,2,3$. Using Theorem~\ref{theorem:q}, we compute these matrices and find that their levels are $\ell(Q_1)=3$, $\ell(Q_2)=15$, and $\ell(Q_3)=5$, confirming that the three generalized cospectral mates correspond to pairwise distinct levels, in accordance with Theorem~\ref{theorem:level-unique}.
%

\end{example}

%

\begin{table}[htbp]
\centering
\begin{tabular}{c c c}
\hline
$n$ & \# controllable graphs & \# graphs in $\mathcal{F}_n$ \\
\hline
11 & 6,705 & 3,573 \\
12 & 7,708 & 3,646 \\
13 & 8,441 & 3,844 \\
14 & 9,061 & 3,895 \\
15 & 9,427 & 3,911 \\
20 & 9,982 & 3,979 \\
25 & 9,998 & 3,897 \\
30 & 10,000 & 3,973 \\
35 & 10,000 & 3,930 \\
40 & 10,000 & 3,953 \\
45 & 10,000 & 3,908 \\
50 & 10,000 & 3,920 \\
\hline
\end{tabular}

\caption{Statistics for 10,000 random graphs ($p=0.5$) for each order $n$. The columns report, respectively, the order $n$, the number of controllable graphs and the number of graphs in $\mathcal{F}_n$.}
\label{tab:snf-summary}
\end{table}

To assess how broadly Theorem~\ref{theorem:main} applies, we conducted a numerical experiment in which we generated 10,000 random graphs with edge probability $0.5$ for each graph order $n$. For each $n$, we counted the number of controllable graphs and the number of graphs belonging to $\mathcal{F}_n$. The results, summarized in Table~\ref{tab:snf-summary}, indicate that as $n$ increases, the proportion of graphs belonging to $\mathcal{F}_n$ stabilizes around 39\%.



\section{Conclusion}
\label{section:conclusion}
In this paper, we have addressed the problem of determining the number of generalized cospectral mates for a graph by analyzing the arithmetic properties of the walk matrix. We identified a significant family of controllable graphs, denoted by $\mathcal{F}_n$, and established a concrete upper bound on the number of their generalized cospectral mates. 

By leveraging the Smith Normal Form of the walk matrix and the level of rational orthogonal matrices, we proved that each admissible level corresponds to at most one generalized cospectral mate up to isomorphism, allowing us to bound the number of such mates based on the prime factorization of the last invariant factor of $W(G)$. Our numerical experiments demonstrate that $\mathcal{F}_n$ encompasses a substantial proportion of random graphs, approaching approximately $39\%$ as $n$ increases, and we provided a tight example where the derived bound is attained.

Several avenues for future research emerge from this work. First, the definition of $\mathcal{F}_n$ relies on specific constraints on the invariant factors, namely that $d_{\lceil n/2 \rceil} = 1$ and $d_{n-1} = 2$. A natural direction would be to relax these conditions to cover a broader range of controllable graphs. Generalizing the modular constraints on the rational orthogonal matrices could extend the applicability of these bounds to families where the rank of the walk matrix over finite fields behaves differently.

Furthermore, the algebraic techniques developed here need not be limited to the adjacency matrix. Future investigations could explore establishing similar upper bounds in other settings, such as using the signless Laplacian or the distance matrix. Finally, while this study focused on the walk matrix generated by the all-ones vector $e$, employing different control input vectors could reveal new structural insights. This approach might facilitate the determination of generalized cospectrality for graphs that are not controllable with respect to $e$ but may be controllable with respect to other vectors.


\end{document}